\documentclass[11pt]{article}

\usepackage[a4paper,margin=1in]{geometry}

\usepackage{amsmath}
\usepackage{amssymb}
\usepackage{amsthm}

\usepackage{graphicx}
\usepackage{subcaption}

\usepackage{authblk}

\usepackage[
    backend=biber,
    style=numeric-comp,
    doi=true,
    sorting=none
]{biblatex}
\newtheorem{theorem}{Theorem}
\newtheorem{lemma}[theorem]{Lemma}

\theoremstyle{definition}
\newtheorem{assumption}{Assumption}

\title{\textbf{The inverse elastic scattering problem in the cavity
with impedance boundary}}

\author[1]{Shiyu Xu}
\author[1]{Huiling Zheng}
\author[1]{Fang Zeng\thanks{Corresponding author: fzeng@cqnu.edu.cn}}

\affil[1]{School of Mathematical Sciences,
Chongqing Normal University,
No. 37 University Town Middle Road,
Shapingba District, Chongqing 401331, China}

\date{}

\begin{document}

\maketitle

\begin{abstract}
In this paper, we investigate the elastic inverse scattering problem in a two-dimensional cavity with an impedance boundary condition. We prove that the scattered fields inside the cavity satisfy a reciprocity relation, and prove that both the shape of cavity and the impedance parameter on the boundary can be uniquely determined from the scattered fields measured on an interior curve. Numerically, we first employ the linear sampling method to reconstruct the shape, and then recover the impedance parameter by combining potential theory with the least squares method. Several numerical experiments are presented to demonstrate the feasibility and effectiveness of our methods.
\end{abstract}

\noindent\textbf{Keywords:}
elastic inverse scattering,
impedance boundary condition,
linear sampling method,
least squares method

\section{Introduction}
Inverse scattering problems have long been a central topic in applied mathematics and engineering, with profound implications in fields such as non-destructive testing, geophysical exploration, and medical imaging \cite{colton1998inverse}. These problems aim to reconstruct unknown scatterers or medium properties from measured scattered fields, presenting unique challenges due to their inherent ill-posedness and the complexity of wave-matter interactions. Among these, interior inverse scattering problems, where both the excitation source and measurements are confined  within the scattering domain, have gained increasing attention for applications in structural health monitoring of enclosed cavities or defect detection in bounded media. In recent years, various numerical methods have been proposed to address such inverse problems, including qualitative sampling methods \cite{ji2018direct,liu2019extended,zeng2021interior}, iterative quantitative methods \cite{chang2024novel,bao2019multifrequency,yin2023hybrid}, statistical inversion methods \cite{daza2017solution,palafox2017point,huang2021bayesian}, machine learning methods \cite{khoo2019switchnet,zang2020weak,raissi2019physics}.
	
	For interior inverse scattering in elasticity, existing studies have primarily focused on Dirichlet or Neumann boundary conditions \cite{Zeng2024NearfieldIM,Wang2025BayesianAF,ou2022interior,li2025factorization}, with limited results for impedance conditions. In contrast, impedance conditions introduce a coupling between the field and its conormal derivative, adding mathematical complexity to both the direct and inverse problems. This paper investigates the interior inverse scattering problem for an impenetrable elastic cavity with an impedance boundary condition, which offers a more realistic model of practical engineering surfaces \cite{chen2025shape}. The objective is to determine both the unknown cavity boundary and the impedance coefficient from scattered field data measured on an interior curve.
	We extend prior work on interior inverse acoustic scattering \cite{qin2012inverse2,Qin2013ReconstructionFC,qin2021reconstruction} to elasticity, which represents a non‑trivial generalization. Our study advances the theoretical understanding of interior inverse elastic scattering and provides a practical framework for reconstructing complex cavities with non-ideal boundaries and impedance parameters, with potential applications in engineering inspection and material science.
	
	The remainder of the paper is structured as follows. In section \ref{Problemf}, we formulate the interior
	scattering problem with impedance boundary conditions mathematically, prove a reciprocity property of the scattered elastic field, which is critical for uniqueness and numerical schemes. We also show that both the boundary and impedance parameter can be uniquely determined from the knowledge of incident elastic point sources and measurements on an interior curve. Section \ref{LMS} details the linear sampling method and its theoretical foundation for boundary reconstruction, while section \ref{impedance determine} focuses on reconstructing the impedance
	coefficient. We provide some preliminary numerical examples to demonstrate the feasibility of the proposed methods in section \ref{NE}. 

\section{Problem Formulation}\label{Problemf}
	Let $D \subset \mathbb{R}^2$ be a bounded, simply connected domain with $C^2$ continuous boundary $\partial D$. The medium inside the impenetrable elastic cavity $D$ is homogeneous and isotropic. Let $\omega > 0$ denote the frequency and $\beta$ impedance coefficient. The propagation of time-harmonic elastic waves in $D$ with an impedance boundary condition is governed by
	\begin{equation}
		\Delta^*u^s + \omega^2u^s = 0,\; x\in D,   \label{eq:(1)}
	\end{equation}
	where $u=u^s + u^i$ is the total field, and $u^s$ denotes the scattered field excited by an elastic point source $u^i(x,z,p)=E(x,z)p$ located at $z \in D$ with polarization $p \in \mathbb{S}:=\{x\in \mathbb{R}^2:|x|=1 \}$. 
	The Lamé operator $\Delta^*u$ and conormal derivative $\partial u/ \partial \nu$ are defined as
	\begin{equation*}
		\Delta^*u=\mu\Delta u+(\lambda+\mu) \nabla (\nabla \cdot u),
	\end{equation*}
	\begin{equation*}
		\frac{\partial u}{\partial \nu}=\lambda(\nabla\cdot u)\nu+\mu(\nabla u+\nabla u^T)\nu,
	\end{equation*}
	where $\lambda$ and $\mu$ are Lamé parameters, and $\nu$ is the unit outward normal vector to $\partial D$. Note that the conormal derivative has an intuitive physical meaning \cite{ammari2015mathematical}: 
      \begin{equation*}
		\frac{\partial u}{\partial \nu} = Tu,\ on\ \partial D.
	\end{equation*}
	In $\mathbb{R}^2$, the Green's tensor $E(x,z)$ of elastic scattering can be expressed as
	\begin{equation}
		\begin{aligned}
			E(x,z)=\frac{i}{4\mu}H^{(1)}_0(k_s|x-z|)I+\frac{i}{4w^2}\nabla^T_x \nabla_x(H^{(1)}_0(k_s(|x-z|))-H^{(1)}_0(k_p(|x-z|)),\ x\neq z,
		\end{aligned}
	\end{equation}
	where 
	\[
	k_p=\frac{w}{\sqrt{\lambda+2\mu}}, \quad  k_s=\frac{w}{\sqrt{\mu}},
	\]
	are the wavenumbers for compressional (P-wave) and shear (S-wave) components, respectively.
	The conormal derivative tensor $\partial E(x,z) / \partial \nu(x)$ is defined such that
	\begin{equation*}
		\frac{\partial E(x,z)}{\partial \nu} p = \frac{\partial E(x,z)p}{\partial \nu},
	\end{equation*}
	for all constant vectors $p$. It is easy to verify that
	\begin{equation} \label{SmmetrE}
		E(x,z) = E(z,x),   \quad
		\frac{\partial E(x,z)}{\partial \nu(x)}  = \frac{\partial E(z,x)}{\partial \nu(x)}.
	\end{equation}   
	
	For a given elastic point source $u^i$ and boundary $\partial D$, the direct scattering problem involves solving for $u^s$ in $D$ under the impedance boundary condition
	\begin{equation}
		\frac{\partial u}{\partial v}+i\beta u =0,\; x\in \partial D,  \label{eq:(2)}
	\end{equation}
	where $\beta(x) \in C(\partial D)$ is the impedance parameter. This paper addresses the inverse scattering problem of determining the unknown boundary $\partial D$ and the impedance $\beta(x)$ on the boundary from $u^s$ measured on a closed smooth curve $C\subset D$. Without loss of generality, $C$ is assumed to be a circle, and $\dot{C}$ denotes its interior. We adopt the following assumption in this paper.
	\begin{assumption}
		$\omega^2$ is not a Dirichlet eigenvalue of the negative Lamé operator in $\dot{C}$.    
	\end{assumption}
	This assumption is non-restrictive, as $C$ can be chosen such that it holds.
	
	\subsection{Uniqueness of the Inverse Problem}
	Using Betti’s formula \cite{arens2001linear}, the solution $u^s$ of \eqref{eq:(1)} can be expressed as
	\begin{equation} \label{eq:(5)}
		u^s(x,z,p)=\int_{\partial D} E(x,y)\frac{\partial u^s(y,z,p)}{\partial \nu(y)}-\frac{\partial E(x,y)}{\partial \nu(y)}u^s(y,z,p) \,ds(y),
	\end{equation}
	for $x \in D$, $z \in C$ and polarization $p \in \mathbb{S}$. We first prove a reciprocity relation for the scattered elastic field $u^s$ on $C$. 
	\begin{lemma}
		Assume $u^s(\cdot,z,p)$  satisfy equations \eqref{eq:(1)} and \eqref{eq:(2)}. For any $x, z \in C$ and polarizations $p,q \in \mathbb{S} $, we have  
		$$
        q \cdot u^s(x,z,p) = p \cdot u^s(z,x,q).
        $$
	\end{lemma}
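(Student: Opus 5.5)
The plan is to apply Betti's second identity in $D$ to the two total fields $u(\cdot,z,p)$ and $u(\cdot,x,q)$. Both solve the Navier equation in $D$ except at their source points, and both satisfy the impedance condition \eqref{eq:(2)} on $\partial D$. Write $u_1=u(\cdot,z,p)=E(\cdot,z)p+u^s(\cdot,z,p)$ and $u_2=u(\cdot,x,q)=E(\cdot,x)q+u^s(\cdot,x,q)$.

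First, the boundary term vanishes. On $\partial D$ we have $\partial u_j/\partial\nu=-i\beta u_j$, so
\begin{equation*}
\int_{\partial D}\Big(u_1\cdot\frac{\partial u_2}{\partial\nu}-u_2\cdot\frac{\partial u_1}{\partial\nu}\Big)\,ds=\int_{\partial D}\big(-i\beta\,u_1\cdot u_2+i\beta\,u_2\cdot u_1\big)\,ds=0 .
\end{equation*}
The identity uses the bilinear (not sesquilinear) pairing, so no complex conjugation appears.

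Second, Betti's identity is applied in $D$ minus two small discs $B_\varepsilon(x)$ and $B_\varepsilon(z)$; the two point sources are distinct, and the case $x=z$ reduces to the symmetry of $E$ plus a limiting argument. The pairings are then split into four pieces: incident/incident, incident/scattered, scattered/incident and scattered/scattered.

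The key step, and the main obstacle, is evaluating the small-circle contributions. As $\varepsilon\to0$, the singularity of $E(\cdot,z)p$ at $z$ paired with the smooth part of $u_2$ near $z$ produces $\pm p\cdot u_2(z)$. This is the standard jump relation of the elastic fundamental solution underlying the representation \eqref{eq:(5)}: $\lim_{\varepsilon\to0}\int_{|y-z|=\varepsilon}\big(E(y,z)p\cdot\partial_\nu w-w\cdot\partial_\nu E(y,z)p\big)ds = c\,p\cdot w(z)$ with a fixed sign $c=\pm1$. The same holds at $x$, giving $\pm q\cdot u_1(x)$. On the small circles, the incident/incident singular–singular term near $z$ is handled as follows. $E(\cdot,x)q$ is smooth near $z$, so it contributes $p\cdot E(z,x)q$; near $x$ it contributes $q\cdot E(x,z)p$. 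By the symmetry \eqref{SmmetrE}, $E(x,z)=E(z,x)$, and this matrix is also symmetric, so these two cancel.

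Collecting terms with the same orientation convention yields
\begin{equation*}
p\cdot u_2(z)-q\cdot u_1(x)=p\cdot E(z,x)q-q\cdot E(x,z)p ,
\end{equation*}
so the incident contributions cancel and $p\cdot u^s(z,x,q)=q\cdot u^s(x,z,p)$.

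An alternative, cleaner route avoids total fields. Apply Betti in all of $D$ to $u^s(\cdot,z,p)$ and $u^s(\cdot,x,q)$, which are smooth in $D$, so the volume integral vanishes. Then use \eqref{eq:(2)} in the form $\partial_\nu u^s+i\beta u^s=-(\partial_\nu E(\cdot,z)p+i\beta E(\cdot,z)p)$. Finally, use \eqref{eq:(5)} evaluated at $x$ and at $z$, with the symmetry \eqref{SmmetrE}, to identify the boundary integrals $\int_{\partial D}(E(\cdot,x)q\cdot\partial_\nu u^s(\cdot,z,p)-u^s(\cdot,z,p)\cdot\partial_\nu E(\cdot,x)q)\,ds$ with $q\cdot u^s(x,z,p)$. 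This uses that $E(y,x)q\cdot w=q\cdot E(x,y)w$, by symmetry of the matrix. The difficulty in this route is tracking that the impedance terms and the mixed incident–incident boundary integral cancel. The mixed integral vanishes by Betti in $D$ applied to two fundamental solutions, which again reduces to the symmetry $E(x,z)=E(z,x)$. I would present the first route, since the impedance cancellation is immediate there.
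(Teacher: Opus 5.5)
Your proof is correct. Your primary route differs from the paper's, and the alternative route you sketch at the end is essentially the paper's proof.

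The paper never excises discs. It proceeds as follows:
\begin{itemize}
\item It writes $q\cdot u^s(x,z,p)$ as the boundary integral \eqref{eq:(5)} over $\partial D$.
\item It uses the impedance condition to replace $\partial_\nu u^s(\cdot,z,p)$, and, solved the other way round, $\partial_\nu E(\cdot,x)q$ in terms of $u^s(\cdot,x,q)$. The cross terms $\pm i\beta\,E(\cdot,x)q\cdot u^s(\cdot,z,p)$ then cancel.
\item It applies two auxiliary Betti identities, \eqref{eq:(6)} for the two regular scattered fields and \eqref{eq:(7)} for the two point sources. This yields an expression symmetric under $(x,q)\leftrightarrow(z,p)$, which is matched with the analogous expansion of $p\cdot u^s(z,x,q)$.
\end{itemize}

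Your total-field argument uses the same ingredients with less bookkeeping. The impedance terms cancel pointwise on $\partial D$ in one line. The source singularities are handled once through the local Somigliana limit, and the incident--incident terms cancel by symmetry of $E$. It also makes clear that all you need from the boundary condition is that $u\mapsto -i\beta u$ is symmetric for the bilinear pairing, so it extends directly to, e.g., a symmetric matrix-valued impedance.

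The cost is the explicit excision of $B_\varepsilon(x)$ and $B_\varepsilon(z)$. This forces you to treat $x=z$ separately, by joint continuity of $u^s$ in $(x,z)$; that is a continuity argument, not a consequence of the symmetry of $E$ as you suggest. In the paper the singular analysis is absorbed into \eqref{eq:(5)} and \eqref{eq:(7)}. Both stay valid for $x=z$ (the latter by the radiation condition on a large circle), so no limit is needed.

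One presentational point: Betti's identity actually gives $p\cdot u_2(z)-q\cdot u_1(x)=0$. Your displayed relation holds only because both of its sides vanish. It reads more naturally if you state that zero directly and then cancel the incident parts using $p\cdot E(z,x)q=q\cdot E(x,z)p$.
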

	\begin{proof}
		From the integral presentation \eqref{eq:(5)} and the symmetric of Green's tensor \eqref{SmmetrE}, we expand $ q \cdot u^s(x, z, p) $ as
		\begin{align}
			\quad \quad q \cdot u^s(x,z,p) & = \int_{\partial D} q^T E(x,y) \frac{\partial u^s(y,z,p)}{\partial \nu(y)} - q^T\frac{\partial E(x,y)}{\partial \nu(y)}u^s(y,z,p) \,ds(y)  \nonumber
			\\
			& = \int_{\partial D} (E(y,x) q)^T  \frac{\partial u^s(y,z,p)}{\partial \nu(y)} - \left[\frac{\partial E(y,x)q}{\partial \nu(y)} \right]^T u^s(y,z,p) \,ds(y). \label{Usxz}
		\end{align}
		Using the boundary condition \eqref{eq:(2)}
		\[
		\frac{\partial u^s(x,z,p)}{\partial \nu(x)} + i\beta u^s(x,z,p) = - \frac{\partial E(x,z)p}{\partial \nu(x)}-i\beta E(x,z)p, 
		\]
		for $x \in \partial D$, $z \in C$. It follows that 
		\begin{align}
			\frac{\partial u^s(y,z,p)}{\partial \nu(y)} & = -i\beta u^s(y,z,p)-\frac{\partial E(y,z)p}{\partial \nu(y)}-i\beta E(y,z)p,	\label{eq:(22)} 
			\\
			\frac{\partial E(y,x)q}{\partial \nu(y)} & = -i\beta E(y,x)q -\frac{\partial u^s(y,x,q)}{\partial \nu(y)} -i\beta u^s(y,x,q),  \label{eq:(23)}
		\end{align}
		for $y \in \partial D$, $x, \ z \in C$ and $p,\ q \in \mathbb{S} $. 
		Substituting \eqref{eq:(22)} and \eqref{eq:(23)} into \eqref{Usxz}, we have 
		\begin{align}
			q \cdot u^s(x,z,p) 
			= \int_{\partial D} (E(y,x) q)^T  \left[-\frac{\partial E(y,z)p}{\partial \nu(y)}-i\beta E(y,z)p \right ] \nonumber\\ 
			+ \left[ \frac{\partial u^s(y,x,q)}{\partial \nu(y)} + i\beta u^s(y,x,q) \right]^T u^s(y,z,p) \,ds(y).  \label{Usxz1a} 
		\end{align}
		Similarly,   
		\begin{align}
			p \cdot u^s(z,x,q) 
			= \int_{\partial D} (E(y,z) p)^T  \left[-\frac{\partial E(y,x)q}{\partial \nu(y)}-i\beta E(y,x)q \right ] \nonumber \\
			+ \left[ \frac{\partial u^s(y,z,p)}{\partial \nu(y)} + i\beta u^s(y,z,p) \right]^T u^s(y,x,q) \,ds(y). \label{Uszx1b} 
		\end{align}
		As it is show in \cite{ou2022interior}, the third generalized Betti formula \cite{arens2001linear} yields that
		\begin{equation}
			\int_{\partial D} \left[ \frac{\partial u^s(y,x,q)}{\partial \nu(y)} \right]^T u^s(y,z,p)  - (u^s(y,x,q))^T \frac{\partial u^s(y,z,p)}{\partial \nu(y)} \,ds(y)=0,
			\label{eq:(6)}
		\end{equation}
		\begin{equation}
			\int_{\partial D} (E(y,x)q)^T \frac{\partial E(y,z)p}{\partial \nu(y)}-\left[ \frac{\partial E(y,x)q}{\partial \nu(y)} \right]^T E(y,z)p \,ds(y)=0,
			\label{eq:(7)}
		\end{equation}
		for $x, z \in C$ and polarization directions $p,q \in \mathbb{S}$. Substituting \eqref{eq:(6)} and \eqref{eq:(7)} into \eqref{Usxz1a}, we can further deduce \eqref{Usxz1a} to
		\begin{align}
			q \cdot u^s(x,z,p) 
			= \int_{\partial D} \left[- \frac{\partial E(y,x)q}{\partial \nu(y)} -i\beta (E(y,x) q) \right ]^T E(y,z)p  \nonumber \\
			+ \left[ \frac{\partial u^s(y,z,p)}{\partial \nu(y)} + i\beta u^s(y,z,p)  \right]^T u^s(y,x,q) \,ds(y). \label{Usxz2}
		\end{align}
		The proof is done by comparing \eqref{Uszx1b} and \eqref{Usxz2}.
	\end{proof}   

	In the following, we show the uniqueness for the inverse problem.
	
	\begin{theorem}
		Assume that the scattered fields $u^s(x,z,p)$ satisfy equations \eqref{eq:(1)} and \eqref{eq:(2)}, then the boundary $\partial D$ and impedance coefficient $\beta$ can be uniquely determined from $u^s(x,z,p)$ for all $x,z \in C$ and $p \in \mathbb{S}$.
	\end{theorem}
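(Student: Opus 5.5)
Suppose two cavities $D_1,D_2$ with impedances $\beta_1,\beta_2$ produce the same data, $u_1^s(x,z,p)=u_2^s(x,z,p)$ for all $x,z\in C$ and $p\in\mathbb{S}$. The plan is the standard interior argument (as in the acoustic case of \cite{qin2012inverse2}): move the equality of data from $C$ to all of $D_1\cap D_2$, use the singularity of $E$ to force $D_1=D_2$, and then read off $\beta$ from the boundary condition.

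\textbf{Step 1: extending the equality to $\dot C$.} Fix $z\in C$ and $p$. Both $u_1^s(\cdot,z,p)$ and $u_2^s(\cdot,z,p)$ solve $\Delta^*w+\omega^2w=0$ in $D_j\supset\overline{\dot C}$ and coincide on $C$. By Assumption 1, their difference vanishes in $\dot C$, so $u_1^s(x,z,p)=u_2^s(x,z,p)$ for $x\in\dot C$, $z\in C$.

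\textbf{Step 2: moving the source inside.} By the reciprocity Lemma, $q\cdot u_j^s(x,z,p)=p\cdot u_j^s(z,x,q)$. For fixed $x\in\dot C$, the map $z\mapsto u_j^s(z,x,q)$ solves the Navier equation in $D_j$; this is clear from \eqref{eq:(5)}, or from the symmetric representation, since the kernel is smooth in $z$ away from $\partial D_j$. The two such functions agree for $z\in C$, so Assumption 1 again gives equality for $z\in\dot C$. By analytic continuation (unique continuation for the Lamé system), we get $u_1^s(x,z,p)=u_2^s(x,z,p)$ for all $x,z$ in the connected component $G$ of $D_1\cap D_2$ containing $\dot C$. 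Here I would also need that $u^s(x,z,p)$, defined for $z\in C$ in the paper, is meaningful for $z\in D$. This is the well-posed direct problem with the source at $z$, and reciprocity is extended to $x,z\in D$ by the same proof.

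\textbf{Step 3: identifying the boundary (main obstacle).} Suppose $D_1\neq D_2$. Without loss of generality there is $x^*\in\partial G$ with $x^*\in\partial D_1\setminus\overline{D_2}$. Take $z_n=x^*-\frac1n\nu(x^*)\in G$. Then $u_2^s(\cdot,z_n,p)$ stays uniformly bounded, together with its derivatives, near $x^*$, because $x^*$ has positive distance from $\partial D_2$ and well-posedness applies. On the other hand, the impedance condition on $\partial D_1$ gives
\[
\frac{\partial u_1^s}{\partial\nu}+i\beta_1u_1^s=-\Big(\frac{\partial E(\cdot,z_n)p}{\partial\nu}+i\beta_1E(\cdot,z_n)p\Big)\quad\text{at }x^*,
\]
where the right-hand side blows up like $|x^*-z_n|^{-1}$. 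The left-hand side equals the corresponding boundary expression for $u_2^s$ at $x^*$, which stays bounded. This is a contradiction, so $D_1=D_2$. The delicate part is making this rigorous: the equality $u_1^s=u_2^s$ holds in $G$, so traces at $x^*$ are limits from inside $G$. I expect to need $C^2$ regularity and interior/boundary estimates to pass to the limit and compare the blow-up rates. The choice of $p$ must also ensure that the conormal derivative of $E(\cdot,z_n)p$ genuinely blows up, for instance by taking $p=\nu(x^*)$.

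\textbf{Step 4: impedance.} With $D_1=D_2=D$, both $u_1=u_1^s+u^i$ and $u_2=u_2^s+u^i$ agree in $D$, so $(\beta_1-\beta_2)u_1=0$ on $\partial D$. If $\beta_1(x_0)\ne\beta_2(x_0)$, then by continuity $u_1(\cdot,z,p)=0$ on an open arc $\Gamma$, and by the boundary condition $\partial u_1/\partial\nu=0$ there as well. Holmgren/unique continuation for the Lamé system then gives $u_1\equiv0$ in $D\setminus\{z\}$. This contradicts the singularity of $E(\cdot,z)p$ at $z$. Hence $\beta_1=\beta_2$.
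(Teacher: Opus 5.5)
Your proposal follows the paper's proof essentially step for step: you use Assumption 1, unique continuation and reciprocity to extend $u_1^s=u_2^s$ to all $x,z$ in the component of $D_1\cap D_2$ containing $C$; you then play the bounded trace of $u_2^s(\cdot,z_n,p)$ against the blow-up of $\partial E(\cdot,z_n)p/\partial\nu+i\beta_1E(\cdot,z_n)p$ at a point $x^*\in\partial D_1$ as $z_n\to x^*$; and you finish with $(\beta_1-\beta_2)u=0$, Holmgren's theorem and the singularity of $E(\cdot,z)p$ for the impedance. One small slip: since $x^*\in\partial G\subset\overline{D_2}$, the point must be taken in $\partial D_1\cap D_2$ (i.e.\ $\partial D_1\setminus\partial D_2$), not in $\partial D_1\setminus\overline{D_2}$, which is empty here, and your later use of the positive distance from $x^*$ to $\partial D_2$ shows this is what you intended.
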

	
	\begin{proof}
		Step1 (Uniqueness of $\partial D$): 
		Assume $D_1 \neq D_2$ are distinct bounded Lipschitz domains containing $C$ with associated impedance coefficients $\beta_{1}$ and $\beta_{2}$ on the boundary, respectively. And $u^s_{1}$ and $u^s_{2}$ are the corresponding scattered fields satisfying equations \eqref{eq:(1)} and \eqref{eq:(2)} for $D_{1}$ and $D_{2}$, respectively. If
		\begin{equation*}
			u^s_{1}(x,z,p)=u^s_{2}(x,z,p),
		\end{equation*}
		for all $x, z\in C$ and $p\in \mathbb{S}$, then for $v=u^s_{1}-u^s_{2}$, it satisfies
		\begin{align*}
			\Delta^*v+\omega^2v=0,\quad x\in \dot{C},\\
			v=0,\quad x\in C.
		\end{align*}
		Since $\omega^2$ is not a Dirichlet eigenvalue in $\dot{C}$,  we have $v \equiv0 $ in $\dot{C} \bigcup C$.
		Let $D_0$ be an simply connected component of $D_{1}\bigcap D_{2}$ containing $C$, then analyticity implies that $v \equiv0 $ in $\overline{D_{0}}$, i.e.,
		$$u^s_{1}(x,z,p)=u^s_{2}(x,z,p), \ x \in \overline{D_{0}}, \ z \in C. $$
		By reciprocity relationship, we have
        $$
        e_i \cdot u^s_{1}(z,x,p) = p \cdot u^s_{1}(x,z,e_i) = p \cdot u^s_{2}(x,z,e_i) = e_i \cdot u^s_{2}(z,x,p),
        $$
		for $i=1, \ 2$, where $e_1 = (1,0)^T$ and $e_2 = (0,1)^T$ is an orthonormal basis on $\mathbb{S}$. Thus
		$$u^s_{1}(z,x,p)=u^s_{2}(z,x,p), \  z \in C, \ x \in \overline{D_{0}}. $$
		Using the same argument in the above steps, we obtain
		$$u^s_{1}(x,z,p)=u^s_{2}(x,z,p), \ x, z \in \overline{D_{0}}.$$
		
		Without loss of generality, there exists $x^* \in \partial D_{0}$ satisfying $x^* \in \partial D_{1}$ and $x^* \not\in \partial D_{2}$. Let $\nu(x^*)$ be the unit normal vector at $x^*$ pointing into the interior of $D_{1}$. Then for all $n\in\mathbb{N}^+$, the sequence defined by 
		$$x_{n} = x^* + \frac{1}{n}\nu(x^*)$$
		are entirely contained in $D_{0} \subset D_2$. In view of well posedness of the solution in cavity $D_2$,
		\begin{align*}
			\lim_{n \to \infty} \left[ \frac{\partial u^s_{1}(x^*,x_n,p)}{\partial \nu}+i\beta_{1}u^s_{1}(x^*,x_n,p)  \right] 
			& =
			\lim_{n \to \infty} \left[ \frac{\partial u^s_{2}(x^*,x_n,p)}{\partial \nu}+i\beta_{1}u^s_{2}(x^*,x_n,p)  \right]  \\
			& = 
			\frac{\partial u^s_{2}(x^*,x^*,p)}{\partial \nu}+i\beta_{1}u^s_{2}(x^*,x^*,p).
		\end{align*}
		On the other hand, the boundary condition on $\partial D_1$ indicates that
        \begin{equation*}
        \lim_{n \to \infty} \left[ \frac{\partial u^s_{1}(x^*,x_n,p)}{\partial \nu}+i\beta_{1}u^s_{1}(x^*,x_n,p)  \right] 
		= -\lim_{n \to \infty} \left[ \frac{\partial E(x^*,x_n)p}{\partial \nu}+i\beta_{1} E(x^*,x_n)p  \right]  
		= \infty.    
        \end{equation*}
		This yields a contradiction of the limit and thus $D_1 = D_2$, i.e., the boundary $\partial D$ is uniquely determined.
		
		Step2 (Uniqueness of $\beta$):
		With $D_{1}=D_{2}=D$, assume $\beta_{1}(x_0) \neq \beta_{2}(x_0)$ for $x_0 \in \partial D$, then by continuity, there exits an open arc $\Gamma_0 \subset U(x_0,\delta) \bigcap \partial D$, such that $\beta_1(x) \neq \beta_2(x) $ for all $x \in \Gamma_0$, where $U(x_0,\delta)$ is a neighborhood of $x_0$.
        If $u^s_{1}(x,z,p) = u^s_{2}(x,z,p)$ for all $x, \ z \in C$, following the same argument in step 1, we can conclude that
		$$u^s_{1}(x,z,p)=u^s_{2}(x,z,p),$$
		for all $x, z \in \overline{D}$. Let
		\begin{equation*}
			u=u^s_{1}(x,z,p)+E(x,z)p=u^s_{2}(x,z,p)+E(x,z)p, \quad x \in \overline{D}, z \in C.
		\end{equation*} 
		Using the boundary condition on $\partial D_1$ and $\partial D_2$, respectively, we have
		\begin{equation}\label{Total12}
			\frac{\partial u}{\partial \nu}+i\beta_{1}u=0, \quad \frac{\partial u}{\partial \nu}+i\beta_{2}u=0,
		\end{equation}
		on $\partial D_{1} = \partial D_{2} = \partial D$. This means that 
		\begin{equation}\label{Totalu}
			(\beta_{1}(x)-\beta_{2}(x))u(x,z,p) = 0,
		\end{equation}
		for all $x \in \partial D$. Since $\beta_{1}(x) \not\neq \beta_{2}(x)$ for all $x \in \Gamma_0 \subset \partial D$, we can obtain from \eqref{Total12} and \eqref{Totalu} that 
		\[
		u(x,z,p) = 0, \ \frac{\partial u(x,z,p)}{\partial \nu} = 0, \ x \in \Gamma_0.
        \]
		By Holmgren's uniqueness theorem, it follows that $u(x,z,p) \equiv 0$ for all $x \in D \setminus \{z\}$ and $z \in C$. Let $\nu(z)$ be the unit outward normal to $C$, then for the sequence
		\[
		x_{n} = z + \frac{1}{n} \nu(z), \ n = 1,2,\cdots
		\]
		in $D \setminus \{z\}$, it holds that $u(x_{n},z,p) = 0$, i.e., 
		$$u^s_{1}(x_{n},z,p)=-E(x_{n},z)p,$$
		for all $n \in \mathbb{N}^+$. Letting $n \rightarrow \infty$ on both sides, we have
		\[
		\lim_{n \to \infty} u^s_{1}(x_{n},z,p) = u^s_{1}(z,z,p),
		\]
		but the limit on right hand side is unbounded. The proof is done by contradiction, so that $\beta_{1}(x) = \beta_{2}(x)$ for all $x\in \partial D$.
	\end{proof}

	\section{The linear sampling method}\label{LMS}
	To reconstruct the boundary $\partial D$, we define the near field operator $N: \ [L^2(C)]^2\to [L^2(C)]^2$ as
	\begin{equation}
		(Ng)(x)=\int_{C} u^s(x,z,g(z)) \,ds(z),\  g\in [L^2(C)]^2,\ x\in C.\label{NF}
	\end{equation}
	Since $u^s$ is analytic, the operator $N$ is compact. In addition, we show an important property for $N$ in the following theorem.
	\begin{theorem} \label{PropertyN}
		The operator $N$ is injective and has dense range in $[L^2(C)]^2$.
	\end{theorem}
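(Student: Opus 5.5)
We plan to prove injectivity directly and then get dense range from injectivity by using the reciprocity relation (the Lemma above). Throughout, $Ng$ is the scattered field of the superposed incident field
\[
v^i_g(x) := \int_C E(x,z)g(z)\,ds(z),
\]
that is, an elastic single-layer potential with density $g$ on $C$. By linearity, $w_g(x):=\int_C u^s(x,z,g(z))\,ds(z)$ solves \eqref{eq:(1)} in $D$ and satisfies
\[
\frac{\partial w_g}{\partial\nu}+i\beta w_g = -\Bigl(\frac{\partial v^i_g}{\partial\nu}+i\beta v^i_g\Bigr) \quad \text{on } \partial D .
\]

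\textbf{Injectivity.} Suppose $Ng=0$ on $C$. Then $w_g$ solves the Navier equation in $\dot C$ and vanishes on $C$. By the Assumption, $w_g\equiv0$ in $\dot C$, and analyticity gives $w_g\equiv 0$ in $D$. Hence $w_g$ and $\partial w_g/\partial\nu$ vanish on $\partial D$, so the boundary condition forces $\partial v^i_g/\partial\nu + i\beta v^i_g=0$ on $\partial D$.

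Now $v^i_g$ solves the Navier equation in $D\setminus C$, in particular in the annular region $D\setminus\overline{\dot C}$. It is therefore a solution of the homogeneous impedance problem in the cavity, with $C$ playing no role on the outer side. The plan is to show that $v^i_g$ vanishes there. Green's (Betti's) formula on $D\setminus\overline{\dot C}$ together with the imaginary part of the impedance term should give this, provided $\beta$ has a sign (e.g.\ $\operatorname{Re}\beta>0$). Alternatively, we simply invoke the uniqueness of the interior impedance problem, which underlies the well-posedness already assumed in the paper; we take this as given for the chosen $\omega$.

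This step is the main obstacle. The interior impedance problem is uniquely solvable only if $\omega^2$ is not an impedance eigenvalue, and we would state this (or a sign condition on $\beta$) explicitly as part of the well-posedness hypothesis.

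Once $v^i_g\equiv0$ in $D\setminus\overline{\dot C}$, the single layer is continuous across $C$. So inside $\dot C$, $v^i_g$ solves the Navier equation with zero Dirichlet data on $C$, and the Assumption gives $v^i_g\equiv 0$ in $\dot C$. The jump relation for the conormal derivative of the elastic single layer across $C$ then yields $g = -\bigl[\partial v^i_g/\partial\nu\bigr]_C = 0$.

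\textbf{Dense range.} We show that the adjoint $N^*$ is injective. By the Lemma, the kernel $u^s(x,z,\cdot)$ satisfies the transpose symmetry $u^s(x,z,p)\cdot q = u^s(z,x,q)\cdot p$, so the kernel matrix $K(x,z)$ satisfies $K(x,z)^T=K(z,x)$. Consequently
\[
N^*h = \overline{N\bar h}.
\]
Thus $N^*h=0$ implies $N\bar h=0$, hence $\bar h=0$ by injectivity, so $h=0$. Since $\overline{\mathrm{Range}(N)} = \ker(N^*)^\perp = [L^2(C)]^2$, the range of $N$ is dense, which completes the plan.
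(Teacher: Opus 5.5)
There is a genuine gap at exactly the step you flag as the main obstacle: proving that $v^i_g$ vanishes in the annulus $D\setminus\overline{\dot{C}}$. Neither of your proposed fixes works. The function $v^i_g$ is a single-layer potential on $C$, so it does not solve the Navier equation in all of $D$. Its conormal derivative jumps across $C$ by $-g$, which is precisely the quantity you are trying to show is zero. Hence uniqueness for the interior impedance problem in $D$ cannot be applied to it. (Incidentally, for real $\beta>0$ that uniqueness holds for every $\omega$, by taking imaginary parts in Betti's formula and using unique continuation, so eigenvalues are not the issue.)

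Restricting to the annulus does not help either. There you have the impedance condition on $\partial D$ but no condition at all on $C$. Taking imaginary parts in Betti's formula therefore only equates $\int_{\partial D}\beta|v^i_g|^2\,ds$ with an uncontrolled boundary term on $C$. In fact the annular problem has nontrivial solutions. For $z\in\dot{C}$, the total field $u^s(\cdot,z,p)+E(\cdot,z)p$ solves the Navier equation in $D\setminus\overline{\dot{C}}$ and satisfies the homogeneous impedance condition on $\partial D$. Yet it cannot vanish there: unique continuation would then force it to vanish in $D\setminus\{z\}$, contradicting the singularity of $E(\cdot,z)p$ at $z$.

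The missing idea, which is what the paper uses, is to look at $v^i_g$ on the other side of $\partial D$. Since $v^i_g$ is defined on all of $\mathbb{R}^2\setminus C$ and is smooth near $\partial D$, its traces on $\partial D$ from inside and outside coincide. So in $\mathbb{R}^2\setminus\overline{D}$ it satisfies the Navier equation, the Kupradze radiation condition, and $\partial v^i_g/\partial\nu+i\beta v^i_g=0$ on $\partial D$, with $\nu$ pointing into the exterior. Uniqueness for the exterior impedance problem then gives $v^i_g=0$ in $\mathbb{R}^2\setminus\overline{D}$. This is a Rellich-type argument, and it is where a sign condition such as $\beta\ge 0$ genuinely enters. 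Unique continuation extends the vanishing to the connected exterior of $C$, in particular to your annulus. The radiation condition at infinity is what replaces the missing boundary condition on $C$.

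From that point on, your argument (zero Dirichlet data on $C$ from inside, the Assumption, the conormal jump) matches the paper's. Your dense-range argument via $N^*h=\overline{N\bar h}$ is also the same as the paper's.
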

	\begin{proof}
		Let $Ng=0$. To prove injectivity, it suffices to show that $g=0$. Define 
		\begin{equation*}
			v(x)=\int_{C} u^s(x,z,g(z)) \,ds(z),\ x\in D. 
		\end{equation*}
		It's easy to verify that $v$ satisfies
		\begin{align*}
			\Delta^*v+\omega^2v &=0,\quad \text{in} \ \dot{C},\\
			v&=0,\quad \text{on} \ C.    
		\end{align*}
		Since $\omega^2$ is not a Dirichlet eigenvalue, we conclude that $v\equiv0$ in $\dot{C}$. By the unique continuation, we have $v\equiv0$ in $\overline{D}$. It then follows
		$v=0$ and $\frac{\partial v}{\partial \nu}=0$ on $\partial D$, which indicates $\frac{\partial v}{\partial\nu}+i\beta v =0$.
		For $x\in \mathbb{R}^2\setminus C$, let
		\begin{equation*}
			K(x)= \int_{C} E(x,z)g(z) \,ds(z).
		\end{equation*}
		It satisfies $\Delta^*K+\omega^2K=0$ in the exterior of $D$ and the Kupradze’s radiation condition. On $\partial D$, using the boundary condition for $u^s(x,z,g(z))$, we have
		\[
		\frac{\partial K}{\partial \nu}+i\beta K = - \frac{\partial v(x)}{\partial \nu(x)} - i\beta v(x) = 0.
		\]
		Since the solution of exterior impedance problem is unique \cite{athanasiadis2011boundary}, $K=0$ in $\mathbb{R}^2\setminus\overline{D}$.
		It follows from the unique continuation principle that $K=0$ in the exterior of $C$. 
		Using the jump relationship for the single potential
		\begin{equation*}
			K_{+}=K_{-}, \ \frac{\partial K_{+}}{\partial v}-\frac{\partial K_{-}}{\partial v}=-g, \ \text{on} \ C,
		\end{equation*}
		where $v$ is the unit outward normal to $C$, and $\pm$ denote the limits as $x$ approaches $C$ from exterior and interior of $C$, respectively. Since 
		\[
		K_{+} = 0, \  \frac{\partial K_{+}}{\partial v}=0,
		\]
		we can quickly get
		\[
		K_{-}=0,\  \frac{\partial K_{-}}{\partial v}=g \ \text{on} \ C.
		\]
		On the other hand, we have that $K(x)$ satisfies $\Delta^*K+\omega^2K =0$ in $\dot{C}$ with homogeneous Dirichlet condition on $C$.   
		Since $\omega^2$ is not a Dirichlet eigenvalue, we obtain $K=0$ in $C \bigcup \dot{C}$. Thus $g=0$, i.e., $N$ is injective.
		
		From the reciprocity relation, the $[L^2(C)]^2$ adjoint operator $N^*:[L^2(C)]^2 \to [L^2(C)]^2$ can be expressed as
		\begin{equation*}
			(N^*\psi)(x)= \overline{ \int_{C} u^s(x,z,\overline{\psi(z)}) \,d s(z) }, \  \psi\in [L^2(C)]^2,\ x\in C.
		\end{equation*}
		That is $(N^*\psi)(x) = \overline{(Ng)(x)}$ with $g(z)=\overline{\psi(z)}$. Therefore, the injectivity of $N^*$ is obtained by the injectivity of $N$. And since $\mathcal{R}(N)^\perp=\mathcal{N}(N^*) = 0$, we conclude that $N$ has dense range in $[L^2(C)]^2$.
	\end{proof}
	
	Consider the general impedance boundary problem 
    \begin{align}
    \Delta^*u^s+\omega^2 u^s &=0,\qquad x\in D,  \label{eq:(10)}    \\
    \frac{\partial u^s}{\partial \nu}+i\beta u^s &=f,\qquad x\in\partial D,  \label{eq:(11)}
    \end{align}
	where $f\in [H^{-\frac{1}{2}}(\partial D)]^2$.
	The weak formulation of problems  $\eqref{eq:(10)}$ and\ $\eqref{eq:(11)}$ is to find $u^s \in [H^1(D)]^2$ such that
	\begin{equation}
		\begin{aligned}
		&\lambda \int_{D} \bigtriangledown \cdot \ u^s \bigtriangledown \cdot \overline{v} \,dx + \frac{\mu}{2}\int_{D} (\bigtriangledown u^s+(\bigtriangledown u^s)^T) : (\bigtriangledown \overline{v} +
				(\bigtriangledown\overline{v})^T) \,dx\\
		&~~~~~~~~~~~~~~~~~~~~ -\int_{D} w^2 u^s \cdot \overline{v} \,dx + i\beta\int_{\partial D} u^s \cdot \overline{v} \,ds 
		=\int_{\partial D} f \cdot \overline{v} \,ds, \label{eq:formulation}
		\end{aligned}
	\end{equation}
	for all $v \in [H^1(D)]^2$. From \cite{kaiafa2021interior}, we have known that there exists a unique solution of $\eqref{eq:(10)}$ and\ $\eqref{eq:(11)}$ satisfying
	\[
	\| u^s \|_{H^1(D)} \leq M \|f\|_{H^{-\frac{1}{2}}(\partial D)},
	\]
	for given $f \in [H^{-\frac{1}{2}}(\partial D)]^2$ and positive constant $M$. Let  
	\[
	w_{g}(x)=\int_{C} E(x,z) g(z) \,ds(z),\ x\in \mathbb{R}^2\setminus C,
	\]
	with $g\in [L^2(C)]^2$, we define the operator $S: [L^2(C)]^2 \rightarrow [H^{-\frac{1}{2}}(\partial D)]^2$ 
	\[
	Sg=(\frac{\partial w_{g}}{\partial \nu}+i\beta w_{g})|_{\partial D},
	\]
	and the linear operator $B:[H^{-\frac{1}{2}}(\partial D)]^2 \to [L^2(C)]^2$ which maps the boundary value $f$ to the corresponding solution $u^s$ on $C$. Then the near field operator $N$ defined in $\eqref{NF}$ can be factorized as $N=-BS$.    
	
	\begin{theorem}
		The operator $B$ is injective, compact, and has dense range in $[L^2(C)]^2$.
	\end{theorem}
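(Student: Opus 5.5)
We prove the three properties of $B$ separately. Throughout, $B$ maps $f\in[H^{-\frac12}(\partial D)]^2$ to $u^s|_C$, where $u^s\in[H^1(D)]^2$ is the unique weak solution of \eqref{eq:(10)}--\eqref{eq:(11)} given by \eqref{eq:formulation}.

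\textbf{Compactness.} Since $C\subset D$ has positive distance from $\partial D$, interior elliptic regularity for the Lamé system bounds $u^s$ on a neighbourhood $U$ of $C$ with $\overline U\subset D$:
\[
\|u^s\|_{C^1(\overline U)}\le c\,\|u^s\|_{H^1(D)}\le cM\|f\|_{H^{-\frac12}(\partial D)}.
\]
In fact $u^s$ is real-analytic in $D$. Hence $B$ factors as a bounded map $[H^{-\frac12}(\partial D)]^2\to[C^1(C)]^2$ followed by the compact embedding $[C^1(C)]^2\hookrightarrow[L^2(C)]^2$ (Arzelà–Ascoli), so $B$ is compact.

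\textbf{Injectivity.} Suppose $Bf=0$. Then $u^s$ solves $\Delta^*u^s+\omega^2u^s=0$ in $\dot C$ with $u^s=0$ on $C$. By the Assumption, $u^s\equiv0$ in $\dot C$. Unique continuation, exactly as in the proof of Theorem \ref{PropertyN}, gives $u^s\equiv0$ in $D$. Taking traces in $[H^{-\frac12}(\partial D)]^2$ (justified through the weak form \eqref{eq:formulation}, whose left side vanishes) yields $f=\frac{\partial u^s}{\partial\nu}+i\beta u^s=0$.

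\textbf{Dense range.} This is the step needing most care. One route is duality: compute $B^*$ via Betti's formula and show it is injective. The simpler route, which I will follow, is to use what is already proved. Since $N=-BS$, we have $\mathcal R(N)\subset\mathcal R(B)$. Theorem \ref{PropertyN} states that $N$ has dense range in $[L^2(C)]^2$, so $\mathcal R(B)$ is dense as well. This inclusion needs only that $S$ is well defined into $[H^{-\frac12}(\partial D)]^2$, which holds because $w_g$ is smooth near $\partial D$ ($C$ being interior to $D$).

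If a self-contained argument is preferred, I would instead proceed as follows. Take $h\in[L^2(C)]^2$ orthogonal to $\mathcal R(B)$ and let $\phi$ solve the adjoint interior problem
\[
\Delta^*\phi+\omega^2\phi=\int_C E(\cdot,z)\,\overline{h}\,ds\ \text{-type source in } D,\qquad \frac{\partial\phi}{\partial\nu}-i\overline\beta\phi=0 \text{ on }\partial D.
\]
Green's formula then yields $\int_{\partial D}f\cdot\overline{\phi}\,ds=0$ for all $f$, so $\phi=0$ on $\partial D$ and hence $\frac{\partial\phi}{\partial\nu}=0$ there. Holmgren's theorem and the single-layer jump relations on $C$, together with the Dirichlet assumption in $\dot C$, then force $h=0$, mirroring the injectivity part of Theorem \ref{PropertyN}. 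I expect the main obstacle in this alternative to be the regularity and bookkeeping of the adjoint problem. The factorization argument avoids it entirely, so I would present that.
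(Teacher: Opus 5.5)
Your proof is correct. Injectivity is argued exactly as in the paper.

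Your compactness argument, interior $C^1$ bounds followed by Arzel\`a--Ascoli, rests on the same smoothing principle as the paper's. The paper expresses it differently: it factors $B=B_1B_2$ through Cauchy data on an intermediate circle $\Gamma$ and uses the smooth kernel of $B_1$.

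The genuine difference is in the dense-range step. The paper never uses $N$ there. It takes $w=\nabla\varphi+\mathrm{curl}\,\psi$, with $\varphi,\psi$ finite Fourier--Bessel sums in $J_m(k_pr)e^{im\theta}$ and $J_m(k_sr)e^{im\theta}$. It observes that $w|_C=Bf$ with $f=(\partial w/\partial\nu+i\beta w)|_{\partial D}$. It then concludes from completeness of trigonometric polynomials for the $e_r$- and $e_\theta$-components on $C$.

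You instead use $\mathcal R(N)=\mathcal R(-BS)\subset\mathcal R(B)$ together with Theorem \ref{PropertyN}. That theorem is proved before $B$ is even introduced, so there is no circularity.

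What each route buys:
\begin{itemize}
\item Your route is shorter and needs no computation. However, it inherits everything Theorem \ref{PropertyN} rests on: the reciprocity lemma (used to identify $N^*$), uniqueness for the exterior impedance problem, and the superposition identity $N=-BS$.
\item The paper's route is self-contained given only forward well-posedness, and it exhibits an explicit dense subset of $\mathcal R(B)$. To be complete, though, it needs the mode-by-mode coefficient matrix
\[
\begin{pmatrix}
k_pJ_m'(k_pr) & \frac{im}{r}J_m(k_sr)\\
\frac{im}{r}J_m(k_pr) & -k_sJ_m'(k_sr)
\end{pmatrix}
\]
($r$ the radius of $C$) to be invertible for every $m$. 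This invertibility is exactly equivalent to $\omega^2$ not being a Dirichlet eigenvalue in $\dot C$. The paper leaves this point implicit, and your argument sidesteps it entirely.
\end{itemize}
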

	\begin{proof}
	Assume $Bf=0$, i.e., the solution $u^s$ of \eqref{eq:(10)} satisfies $u^s=0$ on $C$. Since $\omega^2$ is not an interior Dirichlet eigenvalue in $\dot{C} \subset D$, we have $u^s\equiv0$ in $\dot{C}$. By the unique continuation principle and trace theorem, we have $u^s\equiv0$ in $D$ and $u^s|_{\partial D} = 0$.
	Then the weak formulation \eqref{eq:formulation} reduces to 
	\[
	\int_{\partial D} f \cdot \overline{v} \,ds = 0,
	\]
	for all $v \in[H^{\frac{1}{2}}(\partial D)]^2$. It follows that $f=0$ on $\partial D$, which means that the operator $B$ is injective.  

    Let
	\[
	\varphi(x)=\sum_{m=-n}^{n} \alpha_{m}J_{m}(k_{p}r) e^{im\theta}, \
	\psi(x)=\sum_{m=-n}^{n} \beta_{m}J_{m}(k_{s}r) e^{im\theta},
	\]
	where $(r,\theta)$ are polar coordinates of $x$, $r=|x|$, and $\alpha_m,\ \beta_m \in l^2$. Define
	\[
	w(x)= \nabla \varphi + curl \psi,
	\]
	with $\nabla v=(\partial_{x}v,\partial_{y}v)^T$ and $curlv=(\partial_{y}v,-\partial_{x}v)^T$. Direct calculating show us that $w(x)$ satisfies $\eqref{eq:(10)}$ and $\eqref{eq:(11)}$ with $f=(\frac{\partial w}{\partial \nu}+i\beta w)|_{\partial D}\in [H^{-\frac{1}{2}}(\partial D)]^2$. 
    Let $(e_r, e_\theta)$ denote the local unit orthogonal basis on $C$ in the polar coordinate system, we have 
	\begin{equation*}
		\begin{aligned}
		& \nabla \varphi = \sum_{m=-n}^{n}\alpha_{m}\left[k_{p}J'_{m}(k_{p}r) e_r+\frac{im}{r}J_{m}(k_{p}r) e_{\theta}\right] e^{im\theta},\\
		&curl \psi = \sum_{m=-n}^{n}\beta_{m}\left[\frac{im}{r}J_{m}(k_{s}r) e_{r}-k_{s}J'_{m}(k_{s}r) e_{\theta}\right] e^{im\theta}, 
		\end{aligned}
	\end{equation*}
	so that
    \begin{equation*}
    w|_C = \sum_{m=-n}^{n} \left[\alpha_{m} k_{p}J'_{m}(k_{p}r) + \beta_m \frac{im}{r} J_{m}(k_{s}r) \right] e^{im\theta} e_r 
	 + \sum_{m=-n}^{n} \left[\alpha_{m} \frac{im}{r} J_{m}(k_{p}r) - \beta_m k_s J'_{m}(k_{s}r) \right] e^{im\theta} e_\theta.    
    \end{equation*}
	Then the density of the range of $B$ follows directly from the completeness of exponential polynomials in $[L^{2}[0,2\pi]]^{2}$.
		
	To show the compactness. We choose a disc $\Omega$ such that $\dot{C} \subset \Omega \subset D$ and denote the boundary of $\Omega$ by $\Gamma$. Using the integral representation formula for $u^s$ in $\Omega$, we can decompose the operator $B$ as $B=B_{1}B_{2}$, where $B_{2}:[H^{-\frac{1}{2}}(\partial D)]^2 \to C(\Gamma)^2 \times C(\Gamma)^2$ is defined by
	\begin{equation*}
		(B_{2}f)(x)=(\frac{\partial u^s}{\partial \nu}|_{\Gamma},u^s|_{\Gamma}), 
	\end{equation*}
	and $B_{1}:C(\Gamma)^2 \times C(\Gamma)^2 \to [L^2(C)]^2$ is defined by
	\begin{equation}
		B_{1}(f_{1},f_{2})(x)=\int_{\Gamma} E(x,y) f_{1}(y) - \frac{\partial E(x,y)}{\partial \nu}  f_{2}(y)\,ds(y),\qquad x\in C.\nonumber
	\end{equation}
	Then the interior regularity of problems $\eqref{eq:(10)}$ and $\eqref{eq:(11)}$ ensures the boundedness of $B_{2}$. And since $B_{1}$ is compact, it follows that the composition $B=B_{1}B_{2}$ is compact.
	\end{proof}
	
	\begin{theorem}\label{RangeB}
	For given artificial polarization $q \in \mathbb{S}$, the function $E(x,s)q$ lies in the range of operator $B$ for $x\in C$ and $s \in \mathbb{R}^2 \setminus C$ if and only if $s \in \mathbb{R}^2\setminus{\overline D}$.
	\end{theorem}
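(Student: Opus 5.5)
We prove the two implications separately. Throughout we use the description of $B$ as the map sending impedance data $f$ on $\partial D$ to the trace on $C$ of the solution of \eqref{eq:(10)}--\eqref{eq:(11)}, together with the unique continuation arguments already used in the proof of Theorem \ref{PropertyN}.

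\emph{Sufficiency.} Let $s \in \mathbb{R}^2\setminus\overline D$. The function $u(x)=E(x,s)q$ is smooth in a neighbourhood of $\overline D$ and satisfies $\Delta^*u+\omega^2u=0$ in $D$. We set
\[
f=\Big(\frac{\partial E(\cdot,s)q}{\partial\nu}+i\beta E(\cdot,s)q\Big)\Big|_{\partial D}.
\]
This $f$ is continuous on $\partial D$, so it lies in $[H^{-\frac12}(\partial D)]^2$. By the unique solvability of \eqref{eq:(10)}--\eqref{eq:(11)} quoted from \cite{kaiafa2021interior}, the solution with data $f$ is exactly $E(\cdot,s)q$. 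Hence $Bf=E(\cdot,s)q|_C$.

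\emph{Necessity.} Suppose $s\in D\setminus C$ (the case $s\in\partial D$ is handled below) and $Bf=E(\cdot,s)q$ on $C$ for some $f$. Let $u$ be the solution with data $f$, and set $v=u-E(\cdot,s)q$.

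1. We treat $s\in D\setminus\overline{\dot C}$ first. Then $v$ solves the Navier equation in $\dot C$ and vanishes on $C$, so $v\equiv0$ in $\dot C$ by Assumption 1.

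2. Unique continuation then gives $u=E(\cdot,s)q$ in $D\setminus\{s\}$, since this set is connected because $D$ is simply connected and $s$ is a single point.

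3. As $x\to s$, the right-hand side blows up like $\log|x-s|$. But $u\in[H^1(D)]^2$ solves the Navier equation in all of $D$, so by interior regularity it is analytic, hence bounded near $s$. This is a contradiction.

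4. For $s\in\dot C$ the same argument works. Now $v$ solves the equation in $\dot C\setminus\{s\}$ only, so we instead compare $u$ and $E(\cdot,s)q$ outside $\dot C$. The natural route is to view $E(\cdot,s)q$ in $\dot C$ as the Dirichlet solution with boundary data $E(\cdot,s)q|_C$. By Assumption 1, $u$ equals this Dirichlet solution in $\dot C$. Since $u$ is regular in $\dot C$, the discrepancy is again a singular function that vanishes on $C$ yet blows up at $s$.

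The main obstacle is making step 4 rigorous. Pointwise identification fails here, because the two functions agree only on $C$ while one of them is singular inside. The plan is to use the regular Dirichlet extension $w$ of $E(\cdot,s)q|_C$ into $\dot C$. By Assumption 1, $w=u$ in $\dot C$, so $E(\cdot,s)q$ lies in the range of $B$ only through $w$. We then either argue that $E(\cdot,s)q$ is not the trace of a function solving the equation in all of $D$, or simply restrict the statement to points $s$ outside $\overline{\dot C}$, as the sampling region requires. The paper presumably intends $s$ to range over points with $\dot C$ excluded, and we would state this explicitly.

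Boundary points $s\in\partial D$ are handled by a limiting argument. For $s\in\partial D$, the function $E(\cdot,s)q$ is not in $H^1(D)$: its gradient behaves like $|x-s|^{-1}$, which is not square integrable near a boundary point in 2D. On the other hand, $u\in H^1(D)$, and unique continuation from $\dot C$ forces $u=E(\cdot,s)q$ throughout $D$. This is a contradiction, so $s\notin\partial D$.
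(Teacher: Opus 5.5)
Your sufficiency argument and your necessity argument for $s\in D\setminus\overline{\dot C}$ and $s\in\partial D$ are correct. They follow the paper's route: the solution with data $f$ would have to coincide with the singular field $E(\cdot,s)q$, which is incompatible with $[H^1(D)]^2$. You carry this out more carefully than the paper does. The paper only asserts that ``it follows from the definition of $B$'' that $E(\cdot,s)q$ solves \eqref{eq:(10)}--\eqref{eq:(11)}. That identification is exactly your steps 1--2 (vanishing in $\dot C$ by Assumption 1, then unique continuation), and those steps need $s\notin\dot C$. A small point: step 2 does not use simple connectivity, since a connected open planar set minus one point is still connected.

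The case $s\in\dot C$ is a genuine gap, and it cannot be closed, because the ``only if'' direction is false there. The paper's proof has the same hole.

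The first paragraph of your step 4 gives no contradiction. If $w$ is the regular Dirichlet solution in $\dot C$, then $E(\cdot,s)q-w$ is just the Dirichlet Green's tensor of the disc applied to $q$. Such a field legitimately vanishes on $C$ and blows up at $s$.

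Your first proposed remedy, showing that $E(\cdot,s)q|_C$ is not the trace of a solution in all of $D$, also fails. Let $s_0$ be the centre of $C$ and use polar coordinates about $s_0$. Since $I=e_re_r^T+e_\theta e_\theta^T$ and the Hessian of a radial function is diagonal in $(e_r,e_\theta)$, we have $E(x,s_0)=a_1(r)e_re_r^T+a_2(r)e_\theta e_\theta^T$. Hence both polar components of $E(\cdot,s_0)q$ on $C$ contain only the modes $e^{\pm i\theta}$. Now take $w=\nabla\varphi+\mathrm{curl}\,\psi$ with $\varphi=\sum_{m=\pm1}\alpha_mJ_m(k_pr)e^{im\theta}$ and $\psi=\sum_{m=\pm1}\beta_mJ_m(k_sr)e^{im\theta}$, as in the paper's density proof for $B$. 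Matching the data on $C$ gives one $2\times2$ system for each $m=\pm1$. Each system is uniquely solvable by Assumption 1. Indeed, a nonzero kernel vector would give a nontrivial $w$ vanishing on $C$, since $\nabla\cdot w=-k_p^2\varphi$ and the scalar curl of $w$ is $k_s^2\psi$; that $w$ would be a Dirichlet eigenfunction of $\dot C$. The resulting $w$ is an entire solution. So $f=(\partial w/\partial\nu+i\beta w)|_{\partial D}$ gives $Bf=E(\cdot,s_0)q|_C$, even though $s_0\in D$.

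Your fallback is therefore the correct fix. The theorem holds for $s\in\mathbb{R}^2\setminus\overline{\dot C}$, and for those $s$ your proof is complete. The same restriction must be carried into part 2) of Theorem \ref{MainThm}, which invokes this result for all $s\in D\setminus C$. The restriction is consistent with the sampling region $\Omega$ actually used in the numerics.
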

	\begin{proof}
	If\ $s \in \mathbb{R}^2 \setminus{\overline D}$, we have the function $E(\cdot,s)q$ satisfies problem $\eqref{eq:(10)}$ and $\eqref{eq:(11)}$ with boundary value
	\[
	f(x) = \frac{\partial E(x,s)q}{\partial \nu} + i\beta E(x,s)q, \ x\in \partial D.
	\]
	It is obvious that $(Bf)(x) = E(x,s)q$ for $x\in C$, i.e., $E(x,s)q$ lies in the range of $B$.           
		
	If $s \in {\overline D} \setminus C$ and $E(x,s)q$ lies in the range of $B$ for $x\in C$, then it follows from the definition of $B$ that $E(\cdot,s)q$ is solution of $\eqref{eq:(10)}$ and $\eqref{eq:(11)}$ with 
	\[
	f(x) = \frac{\partial E(x,s)q}{\partial \nu} + i\beta E(x,s)q, \ x\in \partial D.
	\]
	However, $E(\cdot,s)q \not\in [H^1(D)]^2$. The proof is done by contradiction.
	\end{proof}

	\begin{theorem}
		The operator $S$ is bounded, injective and has dense range in $[H^{-\frac{1}{2}}(\partial D)]^2$.
	\end{theorem}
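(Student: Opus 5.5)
Boundedness of $S$ comes from the fact that $C$ and $\partial D$ are at positive distance. For $x$ in a neighborhood of $\partial D$ and $z\in C$, the kernel $E(x,z)$ and its derivatives in $x$ up to any order are smooth and uniformly bounded. Hence $w_g$ and $\partial w_g/\partial\nu$ restricted to $\partial D$ are continuous, with sup-norms bounded by a constant times $\|g\|_{L^2(C)}$ (Cauchy--Schwarz). Since $\beta\in C(\partial D)$, the map $g\mapsto Sg$ is bounded, and in fact compact, from $[L^2(C)]^2$ into $[C(\partial D)]^2\hookrightarrow[H^{-\frac12}(\partial D)]^2$.

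For injectivity I would essentially repeat the second half of the proof of Theorem \ref{PropertyN}. Suppose $Sg=0$. Then $w_g=K$ is a radiating solution of the Navier equation in $\mathbb{R}^2\setminus\overline D$ satisfying $\partial K/\partial\nu+i\beta K=0$ on $\partial D$. Uniqueness of the exterior impedance problem \cite{athanasiadis2011boundary} gives $w_g=0$ outside $\overline D$, and unique continuation extends this to the whole exterior of $C$. The jump relations for the elastic single-layer potential then give $w_g^-=0$ and $\partial w_g^-/\partial\nu=g$ on $C$. Since $w_g$ solves the Navier equation in $\dot C$ with zero Dirichlet data, Assumption 1 forces $w_g\equiv0$ in $\dot C$, so $g=0$.

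For dense range I would show that the annihilator of the range is trivial, pairing $[H^{-\frac12}(\partial D)]^2$ with $[H^{\frac12}(\partial D)]^2$ via the bilinear (or sesquilinear) $L^2$ pairing. Let $\phi\in[H^{\frac12}(\partial D)]^2$ satisfy $\int_{\partial D}Sg\cdot\phi\,ds=0$ for all $g$. By Fubini and the symmetry \eqref{SmmetrE}, this is equivalent to
\[
\int_{\partial D}\Big(\frac{\partial E(y,z)}{\partial\nu(y)}+i\beta(y)E(y,z)\Big)^T\phi(y)\,ds(y)=0,\qquad z\in C .
\]
That is, the combined potential
\[
P(z)=\int_{\partial D}\Big[\Big(\frac{\partial E(z,y)}{\partial\nu(y)}\Big)^T+i\beta E(z,y)\Big]\phi(y)\,ds(y)
\]
vanishes on $C$. $P$ solves the Navier equation in $D$, so Assumption 1 gives $P=0$ in $\dot C$, and unique continuation gives $P=0$ in $D$.

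The main obstacle is then to pass through $\partial D$ and conclude $\phi=0$. The jump relations for the double- and single-layer potentials express the exterior traces $P^+$ and $\partial P^+/\partial\nu$ in terms of $\phi$ and $\beta\phi$. Since $P^-=\partial P^-/\partial\nu=0$, the jumps give
\[
P^+ = \phi,\qquad \frac{\partial P^+}{\partial\nu}=-i\beta\phi \quad\text{on } \partial D
\]
(signs to be checked). Hence the exterior radiating field $P$ satisfies $\partial P^+/\partial\nu+i\beta P^+=0$ on $\partial D$. Exterior uniqueness again gives $P=0$ outside $D$, and therefore $\phi=P^+=0$. The delicate points are getting the jump signs right in $H^{\pm\frac12}$ for a $C^2$ boundary, and keeping the sign of $\operatorname{Re}(i\beta)$ consistent with the uniqueness result of \cite{athanasiadis2011boundary}. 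If the sign convention causes trouble, I would switch to the sesquilinear pairing, which conjugates $\beta$, and redo the computation with the adjoint potential.
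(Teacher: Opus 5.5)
Your proposal is correct and follows essentially the same route as the paper. Boundedness comes from the smooth kernel. Injectivity reduces to the argument of Theorem~\ref{PropertyN}. For density you use a combined double/single-layer potential that vanishes in $D$; its jump relations give $P^+=\phi$ and $\partial P^+/\partial\nu=-i\beta\phi$, so the problem reduces to the homogeneous exterior impedance problem, as in the paper. Your version is slightly cleaner, because you pair against $[H^{\frac12}(\partial D)]^2$, which is the actual dual of $[H^{-\frac12}(\partial D)]^2$, and you keep the transpose on the traction kernel. The signs you flagged come out exactly as you wrote them.
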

	\begin{proof}
	By defination, the operator $S$ is bounded.
		
	Now we will show the injectivity. Let $Sg=0$, it is sufficient to show that $g=0$. Note that the function $w_{g}(x)$ satisfies
    \begin{align*}
    &\Delta^*w_{g}+\omega^2w_{g}=0,\qquad x\in \mathbb{R}^2\setminus{\overline D},\\
		&\frac{\partial w_{g}}{\partial\nu}+i\beta w_{g}=0,\qquad x\in \partial D,    
    \end{align*}
	and the Kupradze's radiation condition. Following the same argument as in the proof of Theorem \ref{PropertyN}, we can obtain $g = 0$ on $C$. Thus, the operator is injective.
		
	To prove denseness, let $h \in [H^{-\frac{1}{2}}(\partial D)]^2$ satisfy $(Sg,h)=0$ for all $g$, i.e.,
	\begin{align*}
	0&=\int_{\partial D}\int_{C} \left[\frac{\partial E(x,z)}{\partial \nu(x)}+i\beta E(x,z) \right]g(z) \,ds(z)\overline{h(x)}ds(x)\\
	&=\int_{C} \int_{\partial D} \left[\frac{\partial E(x,z)}{\partial \nu(x)}+i\beta E(x,z) \right]\overline{h(x)} \,ds(x)g(z) \,ds(z) .   
	\end{align*}
	Thus
	\[
	\int_{\partial D} \left[\frac{\partial E(x,z)}{\partial \nu(x)}+i\beta E(x,z) \right]\overline{h(x)} \,ds(x) = 0,
	\]
	for all $z\in C$. Define
	\[
	v(z) = \int_{\partial D} \left[\frac{\partial E(x,z)}{\partial \nu(x)}+i\beta E(x,z) \right]\overline{h(x)} \,ds(x), \ z \in \mathbb{R}^2 \setminus \partial D.
	\]
	then $v$ satisfy $\Delta^*v+\omega^2 v = 0$ in $\dot{C}$ and the homogeneous Dirichlet boundary condition on $C$. 
	Since $\omega^2$ is not a Dirichlet eigenvalue in $\dot{C}$, we have $v \equiv 0 $ in $\dot{C}\cup C$. By analyticity, $v=0$ in $\overline{D}$. Applying the jump relation, we have
	\[
	v_+ - v_- = \overline{h}, \quad 
	\frac{\partial v_{+}}{\partial\nu}-\frac{\partial v_{-}}{\partial\nu} = -i\beta\overline{h},
	\]   
	where $\pm$ denote the limits as $x$ approaches $\partial D$ from exterior and interior of $D$, respectively. Then the conditions $v_{-} = \frac{\partial v_{-}}{\partial v}=0$ imply $\frac{\partial v_{+}}{\partial \nu}+i\beta v_{+}=0$  on $\partial D$. 
	 
	Now consider $v$ in the exterior of $D$. It can be verified that $v$ satisfies $\Delta^* v+\omega^2 v =0$ in $\mathbb{R}^2 \setminus {\overline D}$ and the Kupradze's radiation condition. Uniqueness for the exterior impedance problem for Navier equation reads that $v\equiv0$ in the exterior of $D$, then we have $v_+ = 0$ on $\partial D$. Therefore, $h = 0$, which show us that the operator $S$ has dense range in $[H^{-\frac{1}{2}}(\partial D)]^2$.
	\end{proof}
	
	To determine the boundary $\partial D$, we introduce the near field equation		
	\begin{equation}
		\int_{C} u^s(x,z,g_s(z)) \,ds(z)=E(x,s)q, \label{eq:(8)}
	\end{equation} 
	for all $x \in C$. Here, $s$ is the sampling point and $q \in \mathbb{S}$ is an artificial polarization. Using the near field operator $N$ defined in \eqref{NF}, we can present the integral equation \eqref{eq:(8)} as
	\[
	(Ng_s)(x) = E(x,s)q, \quad  x \in C,
	\]
	where the operator $N$ has decomposition $N=-BS$.
	The basic idea of the linear sampling method is to solve the near field equation for indicator function $g_s \in [L^2(C)]^2$.   
	For $s \in \mathbb{R}^2 \setminus \overline{D}$, we can see that if $g_s$ is a solution for the near field equation, then $u^s$ and the elastic point source $E(x,s)q$ satisfy the same boundary condition, i.e.,
    \begin{equation} \label{BasicLSM}
	\frac{\partial u^s}{\partial \nu} +i \beta u^s = -\frac{\partial E(x,s)q}{\partial \nu} -i \beta E(x,s)q, 
	\end{equation}
	on $\partial D$. As $s \rightarrow \partial D$, we have that the right hand of \eqref{BasicLSM} blows up, and hence the left hand blows up. Thus $\|g_s\|_{L^2(C)} \rightarrow \infty $ and this behavior determines $\partial D$. The above argument is only heuristic because of the ill-posedness of \eqref{eq:(8)}. However, we will show that the near field equation admits an approximate solution that behaves similarly as $s \rightarrow \partial D$. The following theorem is of fundamental importance to the linear sampling method for the inverse scattering problem we considered in the paper.
	\begin{theorem}\label{MainThm}
	For sampling point $s \in \mathbb{R}^2 \setminus C$ and artificial polarization $q \in \mathbb{S}$.
	\begin{enumerate}
		\item [1)] If $s \in \mathbb{R}^2\setminus\overline{D}$, then for any $\epsilon>0$, there exists a solution $g^{\epsilon}_s \in [L^2(C)]^2$ satisfying
		\begin{equation*}
			\Vert Ng^{\epsilon}_{s}(x)-E(x,s)q\Vert_{L^2(C)}<\epsilon.  
		\end{equation*}\\
		 For a fixed $\epsilon$, we have
		\begin{equation}\label{MainThm1}
			\lim_{s \to \partial D} \Vert S g^{\epsilon}_{s} \Vert_{H^{-\frac{1}{2}}(\partial D)} = \infty,
		\end{equation}
		and 
		\begin{equation}\label{MainThm2}
			\lim_{s \to \partial D}\Vert g^{\epsilon}_{s}\Vert_{L^2(C)}=\infty.
		\end{equation}
		\item [2)] If $s \in D\setminus C$, then for every $\epsilon>0$, one can find a solution $g^{\epsilon}_s \in [L^2(C)]^2$ such that
		\[
		\Vert Ng^{\epsilon}_{s}(x) - E(x,s)q \Vert_{L^2(C)} < \epsilon.
		\]
		Furthermore, we have
		\begin{equation}\label{MainThm3}
			\lim_{\epsilon \to 0} \Vert S g^{\epsilon}_{s} \Vert_{H^{-\frac{1}{2}}(\partial D)} = \infty,
		\end{equation}
		and 
		\begin{equation}\label{MainThm4}
			\lim_{\epsilon \to 0} \Vert g^{\epsilon}_{s} \Vert_{L^2(C)} = \infty.
		\end{equation}
		\end{enumerate}
	\end{theorem}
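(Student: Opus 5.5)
The argument is the standard linear sampling scheme of Colton--Kress type, run on the factorization $N=-BS$. It rests on the properties of $B$ and $S$ already established and on Theorem \ref{RangeB}.

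\textbf{Case 1), $s\in\mathbb{R}^2\setminus\overline D$.} By Theorem \ref{RangeB} there is $f_s\in[H^{-\frac12}(\partial D)]^2$ with $Bf_s=E(\cdot,s)q$ on $C$, namely
\[
f_s=-\Bigl(\frac{\partial E(\cdot,s)q}{\partial\nu}+i\beta E(\cdot,s)q\Bigr)\Big|_{\partial D}
\]
up to the sign convention of $N=-BS$. Since $S$ has dense range, for any $\epsilon>0$ I choose $g^\epsilon_s$ with
\[
\|Sg^\epsilon_s+f_s\|_{H^{-\frac12}(\partial D)}<\epsilon/\|B\|.
\]
Boundedness of $B$ then gives $\|Ng^\epsilon_s-E(\cdot,s)q\|_{L^2(C)}<\epsilon$.

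For \eqref{MainThm1}, I write
\[
\|Sg^\epsilon_s\|\ge\|f_s\|-\epsilon/\|B\|,
\]
so it suffices to show $\|f_s\|_{H^{-\frac12}(\partial D)}\to\infty$ as $s\to\partial D$. I argue by contradiction. If $\|f_s\|$ stayed bounded along a sequence $s_n\to x^*\in\partial D$, well-posedness of the interior impedance problem would bound $\|E(\cdot,s_n)q\|_{H^1(D)}$ uniformly, because $E(\cdot,s_n)q$ is exactly the solution with data $f_{s_n}$. However, $\|E(\cdot,s_n)q\|_{H^1(D)}\to\infty$, since the gradient of the point source has a $1/|x-s|$ singularity, which is not square integrable near $x^*$ when $s_n$ approaches from outside, as in the acoustic case. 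This singularity estimate is the main technical step. I would check it by restricting to a small cone in $D$ with vertex near $x^*$, using the leading term $\frac{i}{4\mu}H_0^{(1)}$ plus the Hessian part, whose singularity is logarithmic in $E$ itself and of order $|x-s|^{-1}$ in the derivatives.

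Then \eqref{MainThm2} follows from $\|Sg^\epsilon_s\|\le\|S\|\,\|g^\epsilon_s\|$.

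\textbf{Case 2), $s\in D\setminus C$.} Since $N$ has dense range (Theorem \ref{PropertyN}), an approximate solution $g^\epsilon_s$ exists for every $\epsilon$. Suppose \eqref{MainThm3} fails. Then along a sequence $\epsilon_n\to0$ the norms $\|Sg^{\epsilon_n}_s\|$ are bounded, so a subsequence converges weakly to some $h\in[H^{-\frac12}(\partial D)]^2$. Because $B$ is compact, $BSg^{\epsilon_n}_s\to Bh$ strongly in $[L^2(C)]^2$. On the other hand $-BSg^{\epsilon_n}_s=Ng^{\epsilon_n}_s\to E(\cdot,s)q$. Hence $E(\cdot,s)q=B(-h)$ lies in the range of $B$, which contradicts Theorem \ref{RangeB}. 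This proves \eqref{MainThm3}, and \eqref{MainThm4} again follows from boundedness of $S$.

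The expected obstacle is the blow-up of $\|f_s\|_{H^{-\frac12}}$ in case 1. I would handle it through the $H^1$ blow-up of the elastic point source combined with the stability estimate $\|u^s\|_{H^1(D)}\le M\|f\|$, rather than by estimating the traces directly.
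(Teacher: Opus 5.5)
Your proof is correct relative to the paper's earlier results. Part 1) follows the paper's route, with one real addition. Part 2) takes a genuinely different route.

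\textbf{Part 1).} This is the paper's argument: Theorem \ref{RangeB}, density of $\mathcal{R}(S)$, boundedness of $B$, and the reverse triangle inequality. The difference is that the paper only asserts that $\|f_s\|_{H^{-\frac12}(\partial D)}$ blows up ``due to the singularity of $E(x,s)$''. Your argument actually proves it, via the stability estimate $\|u\|_{H^1(D)}\le M\|f\|_{H^{-\frac12}(\partial D)}$ and the divergence of $\int_D|x-s|^{-2}\,dx$. There is a small sign slip: your displayed $f_s$ satisfies $Bf_s=-E(\cdot,s)q$, so you should require $\|Sg^{\epsilon}_s-f_s\|$ small, as the paper does.

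\textbf{Part 2).} The paper applies Tikhonov regularization to $Bf=-E(\cdot,s)q$ through a singular system of $B$. It gets $\|f^{\alpha}_s\|\to\infty$ from Picard's criterion and then approximates $f^{\alpha}_s$ by $Sg$. This proves blow-up only for that particular family. You instead take arbitrary approximate solutions, which exist because $\mathcal{R}(N)$ is dense (Theorem \ref{PropertyN}), and argue by weak compactness: a bounded sequence $Sg^{\epsilon_n}_s$ would give a weak limit $h$ with $-Bh=E(\cdot,s)q$, contradicting Theorem \ref{RangeB}. Your version has three advantages:
\begin{itemize}
\item it is shorter;
\item it needs no singular system (boundedness of $B$ and uniqueness of weak limits suffice);
\item it gives the stronger conclusion that \emph{every} family of approximate solutions blows up, which is the relevant one since the numerics regularize $N$, not $B$.
\end{itemize}
The paper's version only adds an explicit construction of $g^{\epsilon}_s$.

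\textbf{A caveat affecting both proofs.} Both use Theorem \ref{RangeB} for all $s\in D\setminus C$. Its proof identifies the solution with $E(\cdot,s)q$ via uniqueness in $\dot C$ and unique continuation, and this requires $s\notin\overline{\dot C}$. Take $s$ to be the center of $C$. The components of $E(\cdot,s)q|_C$ along $e_r,e_\theta$ then contain only the modes $e^{\pm i\theta}$. By the assumption on $\omega^2$, this trace is matched by an entire field $\nabla\varphi+\mathrm{curl}\,\psi$, as in the density proof for $B$. Hence $E(\cdot,s)q|_C\in\mathcal{R}(B)$, and \eqref{MainThm3} fails at that point for either construction. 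Part 2) should therefore be restricted to $s\in D\setminus\overline{\dot C}$, which is the region the numerics sample anyway.
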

	\begin{proof}
	\begin{enumerate}
	\item [1)] Let $s \in \mathbb{R}^2\setminus\overline{D}$. By Theorem \ref{RangeB}, there there exists
	  $f_s \in [H^{-\frac{1}{2}}(\partial D)]^2$ such that $Bf_s = - E(x,s)q$ for $x \in C$. Since $S$ has dense 
        range in $[H^{-\frac{1}{2}}(\partial D)]^2$ and $B$ is bounded, it follows that for every $\epsilon_{0}>0$, there exists a function $ g_{\epsilon_{0}}^{s}\in [L^2(C)]^2$ satisfying
		\begin{equation}\label{eq:(15)}
			\Vert Sg^{\epsilon_{0}}_{s}-f_{s} \Vert_{H^{-\frac{1}{2}}(\partial D)}<\epsilon_{0}. 
		\end{equation}
		and then 
		\[
		\Vert BSg^{\epsilon_{0}}_{s}-Bf_{s} \Vert_{L^2(C)}<c \epsilon_{0},
		\]
		where $c$ is a positive constant. Let $\epsilon = c \epsilon_{0}$, we have
		\[
		\Vert Ng^{\epsilon}_{s} - E(x,s)q \Vert_{L^2(C)}< \epsilon. 
		\]
		By the definition of $B$, it is obvious that 
		\[
		f_s(x) = -\frac{\partial E(x,s)q}{\partial \nu} - i \beta E(x,s)q, \ x\in \partial D.
		\]
		As $s \to \partial D$, $\Vert f_{s} \Vert_{H^{-\frac{1}{2}}(\partial D)}$ blows up due to the singularity of $E(x,s)$. Then for a fixed $\epsilon$, from \eqref{eq:(15)}, we have \eqref{MainThm1} holds. And thus \eqref{MainThm2} also holds.
			
		\item [2)]  Now let $s \in D\setminus C$, Theorem \ref{RangeB} implies that $E(\cdot,s)q$ is not in the range of the operator $B$. However, $B$ has dense range in $[L^2(C)]^2$. Therefore, applying Tikhonov regularization to the equation $(Bf_s)(\cdot)=-E(\cdot,s)q$, we obtain, for every $\epsilon > 0$, there exists a unique regularized solution $f_s^{\alpha} \in [H^{-\frac{1}{2}}(\partial D)]^2$ given by
		\begin{equation*}
			f_s^{\alpha} = -\sum_{j=1}^{\infty}\frac{\mu_{j}}{\alpha+\mu_j^2}(E(\cdot,s)q,y_{j})x_{j}, 
		\end{equation*}
		where $(\mu_{j},x_{j},y_{j})$ is a singular system for the compact operator $B$ such that
		\[
		\Vert (Bf_s^{\alpha})(\cdot) + E(\cdot,s)q\Vert_{L^2(C)} < \frac{\epsilon}{2},
		\]
		and by Picard's theorem we have $\Vert f_s^{\alpha} \Vert_{H^{-\frac{1}{2}}(\partial D)} \to \infty$ as $\alpha \to 0$. Since $S$ has dense range, there exists $g_s^{\alpha}$ such that	
		\begin{equation}\label{eq:(16)}
			\Vert S g^{\alpha,\epsilon}_{s}  -f^\alpha_{s}\Vert_{H^{-\frac{1}{2}}(\partial D)}<\frac{\epsilon}{2c}, 
		\end{equation}
		where $c>0$ is a constant such that $\Vert B\| < c$. So that,
		\begin{align*}
			& \Vert N g^{\alpha,\epsilon}_{s}(\cdot) - E(\cdot,s)q\Vert_{L^2(C)} \\   
			&  \ \ \ \ \ \ \ \ \ \leq \Vert BS g^{\alpha,\epsilon}_{s} - Bf^\alpha_{s}\Vert_{L^2(C)}+\Vert Bf^\alpha_{s}(\cdot) + E(\cdot,s)q\Vert_{L^2(C)} < \epsilon.
		\end{align*}
		Since $\lim_{\epsilon \to 0}\alpha(\epsilon)=0$, we have
		\[
		\lim_{\epsilon \to 0}\Vert f_s^{\alpha(\epsilon)}\Vert_{H^{-\frac{1}{2}}(\partial D)}=\infty,
		\]
		then \eqref{eq:(16)} indicates that \eqref{MainThm3} holds. And since the operator $S$ is bounded, and therefore \eqref{MainThm4} also holds.
		\end{enumerate}
	\end{proof}
	
	\section{Determination of the surface impedance parameter}\label{impedance determine}
	Given $\partial D$ from linear sampling, we reconstruct $\beta$ following the decomposition method in \cite{zeng2013decomposition}. Let $B_{\rho}$ be a disc large enough such that $D \subset B_\rho$, we define the single layer potential by
	\begin{equation*}
		(L\phi)(x)=\int_{\partial  B_{\rho}} E(x,y)\phi(y) \, ds(y), \ x \not\in \partial B_\rho. 
	\end{equation*}
	Since the kernel $E(x,y)$ is analytic, $L$ is compact. In addition, we have the following property.
	
	\begin{theorem}
		The operator $L$ is compact, injective and has dense range in $[L^2(C)]^2$.
	\end{theorem}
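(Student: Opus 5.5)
We regard $L$ as an operator $L:[L^2(\partial B_\rho)]^2\to[L^2(C)]^2$, $\phi\mapsto (L\phi)|_C$. Since $C\subset D\subset B_\rho$, the curves $C$ and $\partial B_\rho$ are disjoint and at positive distance. The kernel $E(x,y)$ is therefore analytic, in particular continuous, on $C\times\partial B_\rho$. Hence $L$ is a Hilbert--Schmidt integral operator, which gives compactness directly.

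\textbf{Injectivity.} Suppose $L\phi=0$ on $C$, and set $w(x)=\int_{\partial B_\rho}E(x,y)\phi(y)\,ds(y)$ for $x\notin\partial B_\rho$.
\begin{enumerate}
\item The function $w$ solves $\Delta^*w+\omega^2w=0$ in $\dot C$ and vanishes on $C$. By the Assumption, $w\equiv0$ in $\dot C$.
\item By unique continuation (analyticity), $w\equiv0$ in the whole interior of $B_\rho$.
\item By continuity of the single layer potential across $\partial B_\rho$, the exterior trace $w_+$ also vanishes.
\item Outside $B_\rho$, $w$ is a radiating solution of the Navier equation (Kupradze radiation condition) with zero Dirichlet data. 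Uniqueness of the exterior Dirichlet problem gives $w\equiv0$ outside $B_\rho$.
\item The jump relation for the conormal derivative of the single layer, $\partial_\nu w_+-\partial_\nu w_-=-\phi$, already used in the proof of Theorem~\ref{PropertyN}, now yields $\phi=0$.
\end{enumerate}

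\textbf{Dense range.} It suffices to show $L^*$ is injective. Take $h\in[L^2(C)]^2$ with $(L\phi,h)=0$ for all $\phi$. By Fubini and the symmetry \eqref{SmmetrE} of $E$, this reads
\[
\int_C E(y,x)\overline{h(x)}\,ds(x)=0,\qquad y\in\partial B_\rho .
\]
Define $v(y)=\int_C E(y,x)\overline{h(x)}\,ds(x)$, a single layer on $C$.
\begin{enumerate}
\item In the annulus between $C$ and $\partial B_\rho$, and outside $B_\rho$, $v$ solves the Navier equation, and outside it satisfies the radiation condition.
\item On $\partial B_\rho$, $v=0$. Uniqueness of the exterior Dirichlet problem gives $v\equiv0$ outside $B_\rho$.
\item By analyticity, $v\equiv0$ in the exterior of $C$.
\item The argument of Theorem~\ref{PropertyN} then applies verbatim. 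Continuity across $C$ gives $v_-=0$. The Assumption gives $v\equiv0$ in $\dot C$. The conormal jump relation then gives $\overline h=0$.
\end{enumerate}
Thus $\mathcal N(L^*)=\{0\}$, and $L$ has dense range.

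\textbf{Main obstacle.} The steps needing care are the uniqueness of the exterior Dirichlet problem for the Navier equation under the Kupradze radiation condition, and the use of the elastic single-layer jump relations. I will cite these from standard references (e.g.\ \cite{arens2001linear}), exactly as done in the proofs of Theorem~\ref{PropertyN} and of the density of $S$.
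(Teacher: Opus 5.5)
Your proposal is correct and follows the paper's proof essentially step for step. Injectivity uses the Dirichlet assumption in $\dot C$, unique continuation to $B_\rho$, exterior Dirichlet uniqueness and the conormal jump on $\partial B_\rho$; density comes from injectivity of $L^*$ via the same single-layer argument on $C$ as in Theorem~\ref{PropertyN}, and your Hilbert--Schmidt argument for compactness simply spells out the paper's remark that the kernel is analytic on $C\times\partial B_\rho$.
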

	\begin{proof}
	To prove injectivity, assume $L\phi=0$ on $C$. Then it is sufficient to show that $\phi = 0$. Let
	\begin{equation*}
		w(x)=\int_{\partial B_{\rho}} E(x,y)\phi(y) \,ds(y), \ x\in \mathbb{R}^2\setminus\partial B_{\rho},
	\end{equation*}
	we have $w(x)$ satisfies \eqref{eq:(1)} in $\dot{C}$ with homogeneous Dirichlet boundary value on $C$. Since $\omega^2$ is not a Dirichlet eigenvalue in $\dot{C}$, we obtain $w=0$ in $\dot{C}\cup C$. Applying the unique continuation principle, it yields that $w=0$ in $\overline{B_{\rho}}$. From the jump relations for the single layer potential, we obtain
	\[
	w_{+}=w_{-}, \  \frac{\partial w_{+}}{\partial \nu}-\frac{w_{-}}{\partial \nu}=-\phi,
	\]
	on $\partial B_{\rho}$. Since we have know that $w_{-}=\frac{\partial w_{-}}{\partial \nu}=0$ on $\partial B_{\rho}$, then $w_{+}=0$ and $\frac{\partial w_{+}}{\partial \nu}=-\phi$ on $\partial B_{\rho}$. Consider $w$ in the exterior of $B_\rho$, it is a radiating solution of \eqref{eq:(1)} in the exterior of $B_\rho$ with homogeneous Dirichlet boundary value. Since the radiating solution is unique \cite{knops1981three}, $w=0$ in $\mathbb{R}^2\setminus\overline{B_{\rho}}$. Then we have $\frac{\partial w_{+}}{\partial \nu}=-\phi=0$, hence $\phi = 0$, which means $L$ is injective.
		
	In the following, we show that $L$ has dense range in $[L^2(C)]^2$. The $L^2$ adjoint operator $L^*:[L^2(C)]^2 \to [L^2(\partial B_{\rho})]^2$ is given by 
	\[
	(L^*\psi)(x) = \overline{ \int_{C} E(x,y) \overline{\psi(y)} \, ds(y)}, 
	\] 
	for all $\psi\in [L^2(C)]^2$ and $x\in \partial B_{\rho}$. We prove that the operator $L^*$ is injective. Let $L^*\psi = 0$ and define
	\begin{equation*}
		w_0(x)=\overline{ \int_{C} E(x,y) \overline{\psi(y)} \, ds(y)},\ x\in \mathbb{R}^2\setminus C, 
	\end{equation*} 
	then
    \begin{align*}
    \Delta^*w_0+\omega^2w_0 &=0,\qquad \mathbb{R}^2\setminus \overline{B_{\rho}},\\
    w_0 &=0,\qquad x \in \partial B_{\rho},
    \end{align*}
	and the Kupradze's radiation condition. Following the same argument as in the proof of Theorem \ref{PropertyN}, it is straightforward to show that the operator $L^*$ is likewise injective. Since $\mathcal{R}(L)^\perp = \mathcal{N}(L^*)$, we can conclude that $L$ has dense range\ $[L^2(C)]^2$.
	\end{proof}
	
	Given the scattered field $u^s$ on $C$, we can set up the integral equation
	\begin{equation}\label{eq:(17)}
		(L\phi)(x)=u^s(x,z,p), \  x,z\in C  
	\end{equation}
	for unknown density $\phi\in [L^2(\partial  B_{\rho})]^2$. Due to the ill-pose nature, we employ the classical Tikhonov regularization to solve it$\eqref{eq:(17)}$. In particular, we
	look for the approximation solution $\phi_\alpha$ by solving the following regularized problem
	\[
	(\alpha I+L^*L)\phi =L^*u^s(\cdot,z,p),\ z \in C,
	\]
	where $\alpha>0$ is the regularization parameter and $L^*$ is the adjoint operator of $L$. 
	
	Once we have found $\phi_\alpha$, we can compute the approximation scattered field by $u_\alpha^s = L \phi_\alpha$. Then the impedance parameter on $\partial D$ is determined by identifying $\beta$ such that 
	the impedance boundary condition is approximately satisfied, i.e.,
	\[
	\frac{\partial u_\alpha}{\partial \nu} + i \beta u_\alpha \approx 0,
	\]
	where the total field and the conormal derivative on the boundary $\partial D$ are evaluated by
	\[
	u_\alpha = u_\alpha^s + u^i = \int_{\partial B_\rho} E(x,y)\phi_\alpha(y) \,ds(y) + u^i,
	\] 
	and
	\[
	\frac{\partial u_\alpha(x)}{\partial \nu(x)} = \int_{\partial B_\rho} \frac{\partial E(x,y)}{\partial \nu(x)}\phi_\alpha(y) \,ds(y) + \frac{\partial u^i(x)}{\partial \nu(x)},
	\]
	with incident field $u^i = E(x,z)p$. 
	
	Numerically, we use the least squares method to determine the surface impedance $\beta$. Let $\xi_{n}$, $n=0,1,2,...,N$ be some basis functions, we present the impedance function $\beta$ by 
	\begin{equation*}
		\beta(x)=\sum_{n=0}^{N} c_{n}\xi_{n}(x),\ x\in\partial D. 
	\end{equation*}
	Then we can determine the coefficients $c_n, \ n=0,1,2,\cdots,N$ by solving the following
	minimization problem  
	\[
	\min_{c} \sum_{m=1}^M \left| \frac{\partial u_\alpha(x_m)}{\partial \nu} + i \sum_{n=0}^{N} c_{n}\xi_{n}(x_m) u_\alpha(x_m) \right|^2
	\]
	for $x_m, \ m=1,2,\cdots,M$ on $\partial D$, and $c=(c_0,c_1,c_2,\cdots,c_N)$.

    \section{Numerical Examples}\label{NE}

    In this section we present some numerical examples using synthetic data to verify the viability of our methods. Based on the classical potential theory, we know that the single-layer potential
	\begin{equation*}
		u^s(x)=\int_{\partial D}  E(x,y)\phi(y)\,ds(y),\ x\in \mathbb{R}^2\setminus\partial D,
	\end{equation*}
	with continuous density $\phi$. Then $u^s$ is a solution of the interior impedance problem $Eqs.\eqref{eq:(1)}$ and $Eqs.\eqref{eq:(2)}$ in $D$, provided that $\phi(\cdot)=\phi(\cdot,z)$ satisfies the integral equation
	\begin{equation}
		\int_{\partial D} \frac{\partial E(x,y)}{\partial\nu(x)} \phi(y)\,ds(y)+i\beta\int_{\partial D} E(x,y) \phi(y)\,ds(y)+\frac{1}{2}\phi(x) 
		=- \left( \frac{\partial E(x,z)p}{\partial\nu(x)} + i\beta E(x,z)p \right)\label{DirectUs}
	\end{equation}
	for $x\in\partial D$ and $z\in C$. In the numerical examples, we use Nystr$\ddot{o}$m method to solve the integral equation \eqref{DirectUs} for synthetic measurement data $u^s(x,z,p)$ with $x,z\in C$ and polarizations $e_1 = (1,0)^T$ and $e_2 = (0,1)^T$.
	
	After we have recorded the synthetic data on $C$, we turn to the problem of solving
	the linear ill-posed integral equation $\eqref{eq:(8)}$. Using the reciprocity relation, we can rewrite \eqref{eq:(8)} as two scalar equations
	\begin{equation}\label{LsmNumer}
		\int_{C} u^s(z,x,e_l) \cdot g_s(z) \,ds(z) = e_l \cdot (E(x,s)q), \ l=1,2,
	\end{equation}  
	for all $x,z \in C$. Here $s \not\in C$ is the sampling point and $q \in \mathbb{S}$ is artificial polarization. The two scale integrals will be approximated by a quadrature.
	Approximating the two scale integrals in \eqref{LsmNumer} by a quadrature rule. Then the fully discrete problem corresponding to \eqref{LsmNumer} is to find $g_s(z_j) \in \mathbb{C}^2, \ j=1,2,\cdots, N_C$ satisfying
    \[
    \sum_{j=1}^{N_C} \omega_j u^s(z_j, x_m, e_l) \cdot g_s(z_j) = e_l \cdot (E(x_m,s)q)
    \]
    for $l=1,2$ and $m=1,2,\cdots,N_C$. Sampling point $s$ is selected within the region between two rectangular grids $B_{1}$ and $B_{2}$, satisfying $C\subset B_{2} \subset D \subset B_{1}$. 
    We may rewrite the above linear system as
    \begin{equation}\label{LsmNumer1}
    A g_s = b_s,
    \end{equation} 
    where $A$ is a $2N_C \times 2N_C$ matrix, $g_s \in \mathbb{C}^{2N_C}$ is the vector of unknowns, and $b_s$ is the right-hand side depending on the sampling point $s$. Since $A$ comes from a compact operator, we choose to employ the Tikhonov regularization method solving for $g_s$ from \eqref{LsmNumer1}.  

    In numerical simulations, we choose the cavity $D$ with impedance coefficient $\beta$ to be one of the following.
    \begin{enumerate}
    \item an ellipse with parameters given by
    \begin{align*}
    &\partial D = \{x|x = (1.8\cos t,2.6\sin t), \ 0 \leqslant t \leqslant 2\pi \}, \\
    &\beta (t)=2+\sin^2 t,\ 0 \leqslant t \leqslant 2\pi.
    \end{align*}
    \item a triangle with parameters  given by
    \begin{align*}
    &\partial D = \{x|x = (2+0.3\cos 3t)(-\sin t,\cos t), \ 0 \leqslant t \leqslant 2\pi \},\\
    &\beta (t)=2+\cos (t-\pi),\ 0\leqslant t\leqslant 2\pi.
    \end{align*}
    \end{enumerate}

For all examples, we choose the curve $C$ to be a circle with radius $r_C = 0.4$ and centered at the origin. $80$ measurement and point source locations are uniformly distributed on $C$. We consider the pressure point source and the shear point source, respectively, i.e., polarizations $e_1 = (1,0)^T$ and $e_2 = (0,1)^T$. The curve $ \partial B_\rho$  is the circle with radius $r_{B_\rho} = 4.5$. We adopt a global relative noise to contaminate synthetic data matrix \(u^s(x_i,z_j,p)\) for $i,j=1,2,\cdots,80$. In precise, for given noise level \(\delta\), the noisy data are generated as
\[
u^s_{\delta} = u^s + \delta \cdot \frac{\|u^s\|_{\mathrm{F}}}{\|E\|_{\mathrm{F}}} \, E,
\]
where \(\|\cdot\|_{\mathrm{F}}\) denotes the Frobenius norm, and \(E\) is a complex Gaussian random matrix whose real and imaginary parts are independently drawn from the standard normal distribution. 

To reconstruct the boundary $\partial D$ via the linear sampling method, we choose a uniform grid of the sampling region $\Omega:=[-4,4] \times [-4,4] \backslash \ \dot{C}$. After solving \eqref{LsmNumer1} for each sampling point $s \in \Omega$, we compute $\|g_s\|$ in $\Omega$ and plot the heat map.
According to Theorem \ref{MainThm}, $\|g_s\|$ (where $\|\cdot\|$ denotes$\|\cdot\|_{L^2}$) should become large as $s \to \partial D$ from outside of $D$ and $s \in D \setminus C$. 
In Figure $\ref{FIG:0.01ty}(a)$, we show the heat map of the ellipse determined from the measurement data under noise level $\delta = 1\% $, where we also give the exact cavity for comparison.

In order to determine the surface impedance $\beta$, we should guess the boundary of the cavity according
to the numerical results obtained using the linear sampling method. In the following, we choose the reconstructed boundary to be the contour line such that $\|g_s\| = C_g$ for some constant $C_g$, which is determined from the reconstruction of a reference circle. In our examples, we choose 
\[
C_g = 0.6 \max_{s_i}\|g_{s_i}\|,
\]
for all sampling points $s_i \in \Omega$. 
Figure $\ref{FIG:0.01ty}(b)$ compares the approximate boundary of the elliptical cavity with its exact boundary. 
With the approximate boundary obtained in the above, the surface impedance $\beta$ is then recovered via the least squares method following Section \ref{impedance determine}. We choose the basis functions $\xi_{n}=e^{int}$, $(n=0,1,2,...,N)$, where the truncation index $N$ is selected by trial and error. In our examples, we choose $N=2$. Figure $\ref{FIG:0.01ty}(c)$ shows the true impedance and the approximate impedance for the elliptical cavity.
\begin{figure}[!htp]
\centering
\begin{subfigure}[t]{0.4\textwidth}
\centering
\includegraphics[height=5cm]{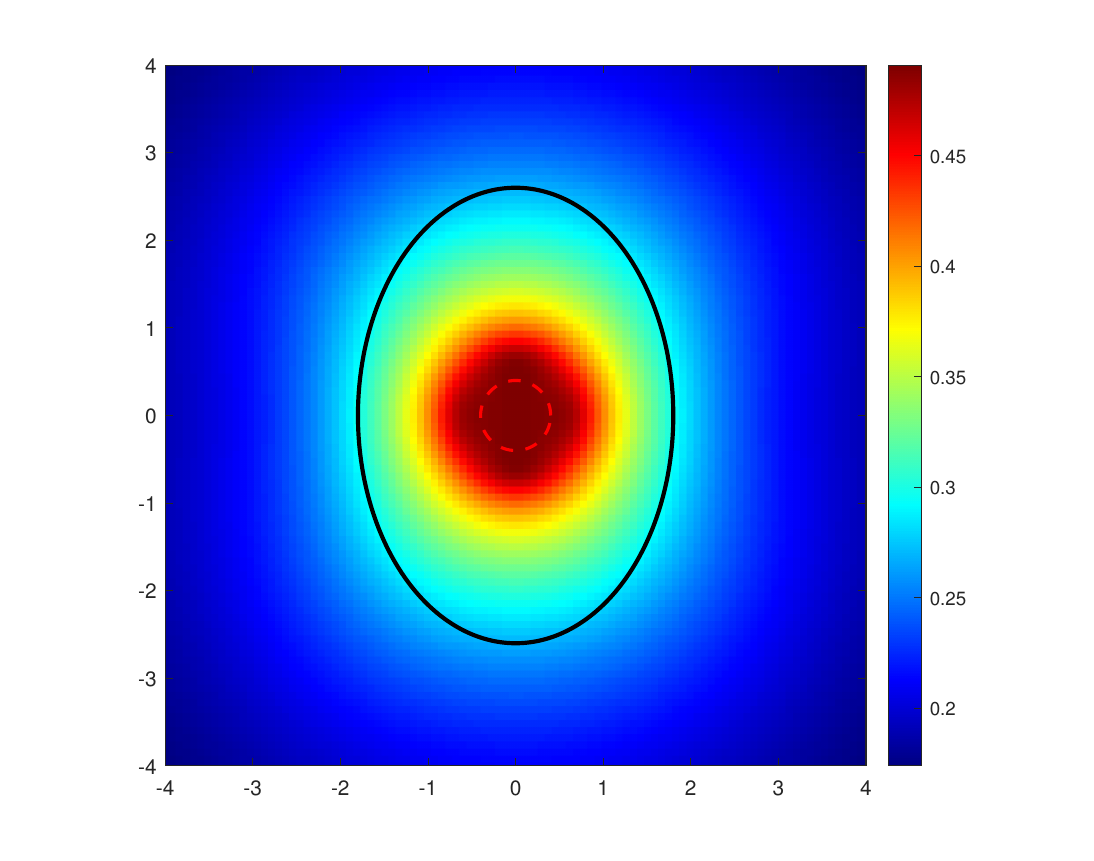}
\caption{The heat map of $\|g_s\|$}
\label{FIG:ellipse map 0.01}
\end{subfigure}\hfil
\begin{subfigure}[t]{0.4\textwidth}
\centering
\includegraphics[height=5cm]{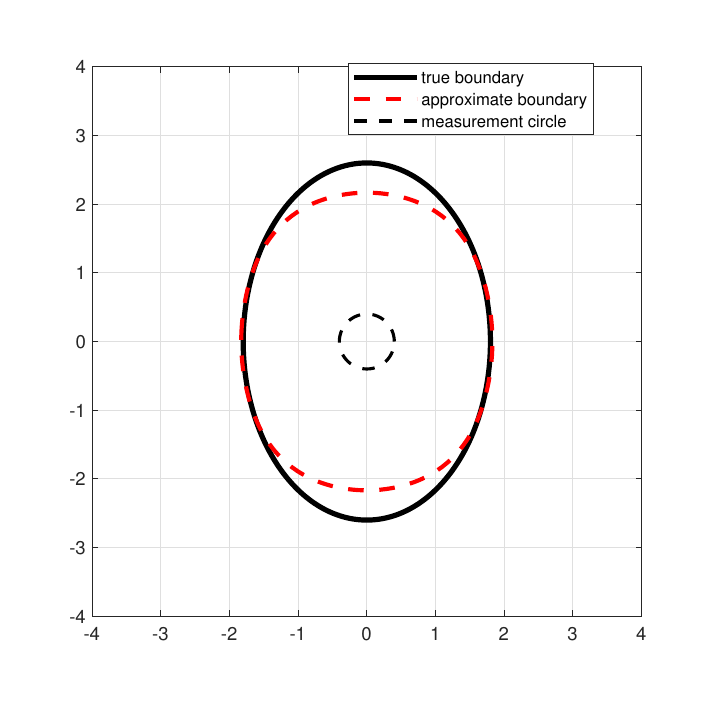}
\caption{Reconstructed result for $\partial D$}
\label{FIG:ellipse boundary 0.01}
\end{subfigure}\hfil
\begin{subfigure}[t]{0.4\textwidth}
\centering
\includegraphics[height=5cm]{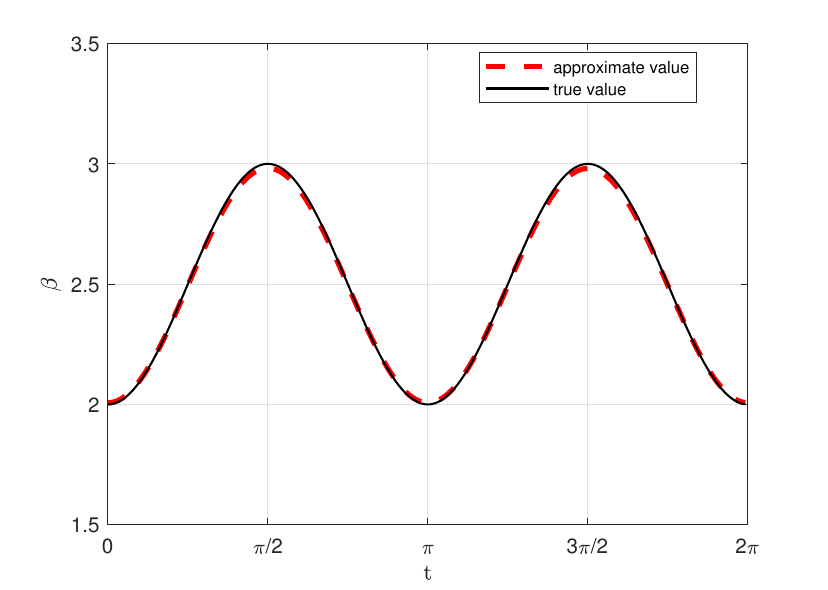}
\caption{Reconstructed result for $\beta$}
\label{FIG:ellipse impedance 0.01}
\end{subfigure}
\caption{Reconstructed results of elliptical cavity with $\omega=\sqrt{14},\lambda=1,\mu=1,\delta = 1\%$.}
\label{FIG:0.01ty}
\end{figure}
Figure $\ref{FIG:0.01ty}$ shows the successful reconstruction of the boundary and surface impedance of the elliptical cavity with $\delta=1\%$ noise. To test whether the method remains effective when the noise level increases, we also considered the reconstruction of the elliptical cavity with $\delta=5\% $ noise. The corresponding results are displayed in Figure $\ref{FIG:0.05ty}$, which indicate that the method exhibits relative stability against measurement noise.
\begin{figure}[!htp]
\centering
\begin{subfigure}[t]{0.4\textwidth}
\centering
\includegraphics[height=5cm]{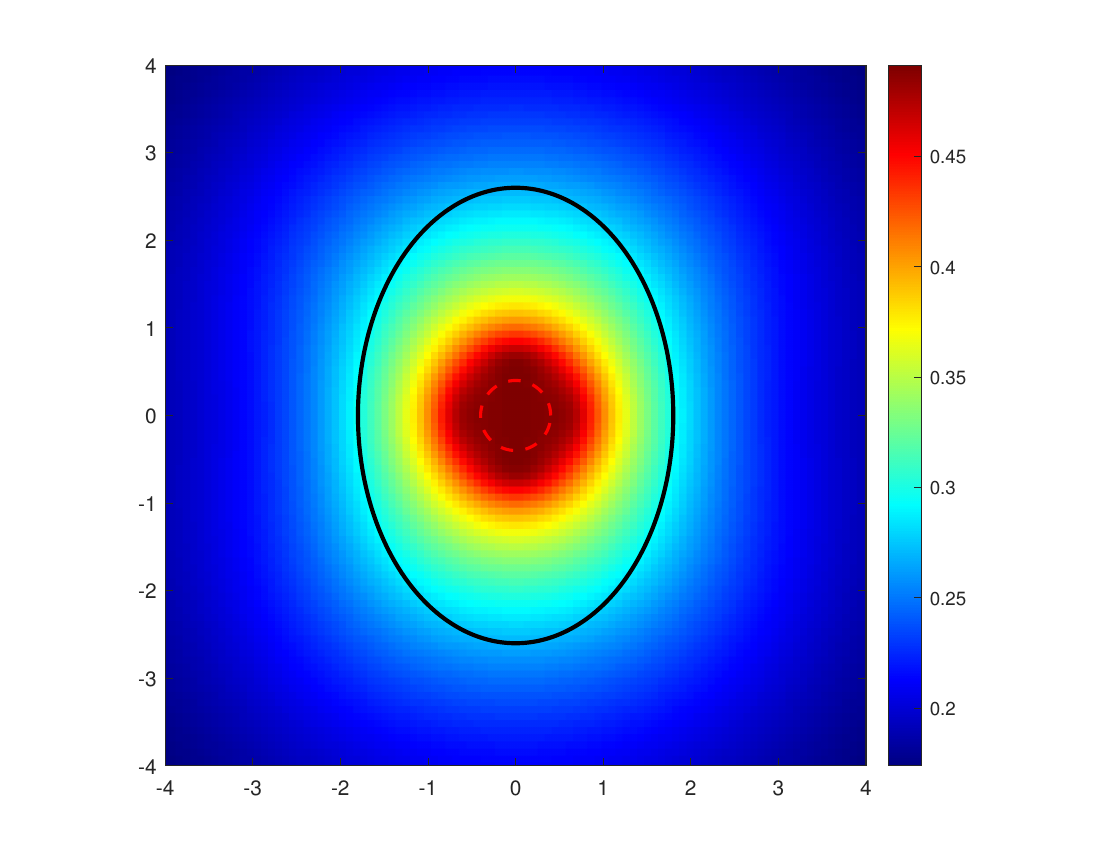}
\caption{The heat map of $\|g_s\|$}
\label{FIG:ellipse 0.05}
\end{subfigure}\hfil
\begin{subfigure}[t]{0.4\textwidth}
\centering
\includegraphics[height=5cm]{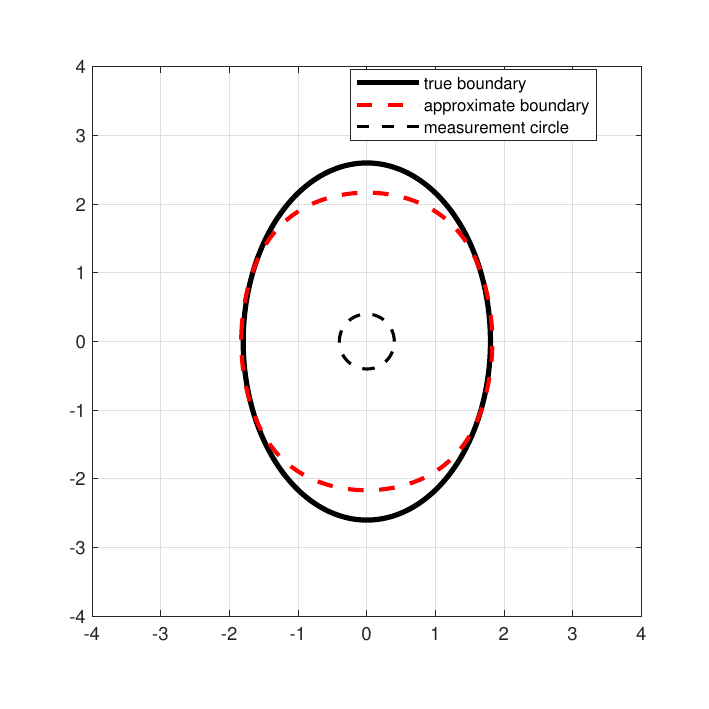}
\caption{Reconstructed result for $\partial D$}
\label{FIG:ellipse boundary 0.05}
\end{subfigure}
\begin{subfigure}[t]{0.4\textwidth}
\centering
\includegraphics[height=5cm]{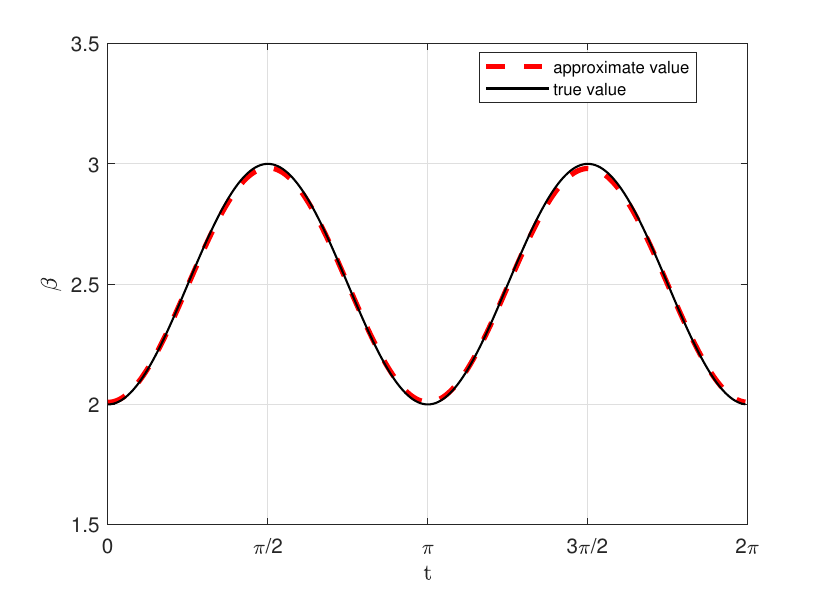}
\caption{Reconstructed result for $\beta$}
\label{FIG:ellipse impedance 0.05}
\end{subfigure}
\caption{Reconstructed results of elliptical cavity with $\omega=\sqrt{14},\lambda=1,\mu=1,\delta=5\%$.}
\label{FIG:0.05ty}
\end{figure}

For the second example, we consider the reconstruction of a triangular cavity that its surface impedance distribution differs from that of the elliptical cavity. The results are showed in Figure \ref{FIG:0.01sj} and Figure \ref{FIG:0.02sj} with different noise level. Considering the added noise, the result is satisfactory.
\begin{figure}[!htp]
\centering
\begin{subfigure}[t]{0.4\textwidth}
\centering
\includegraphics[height=5cm]{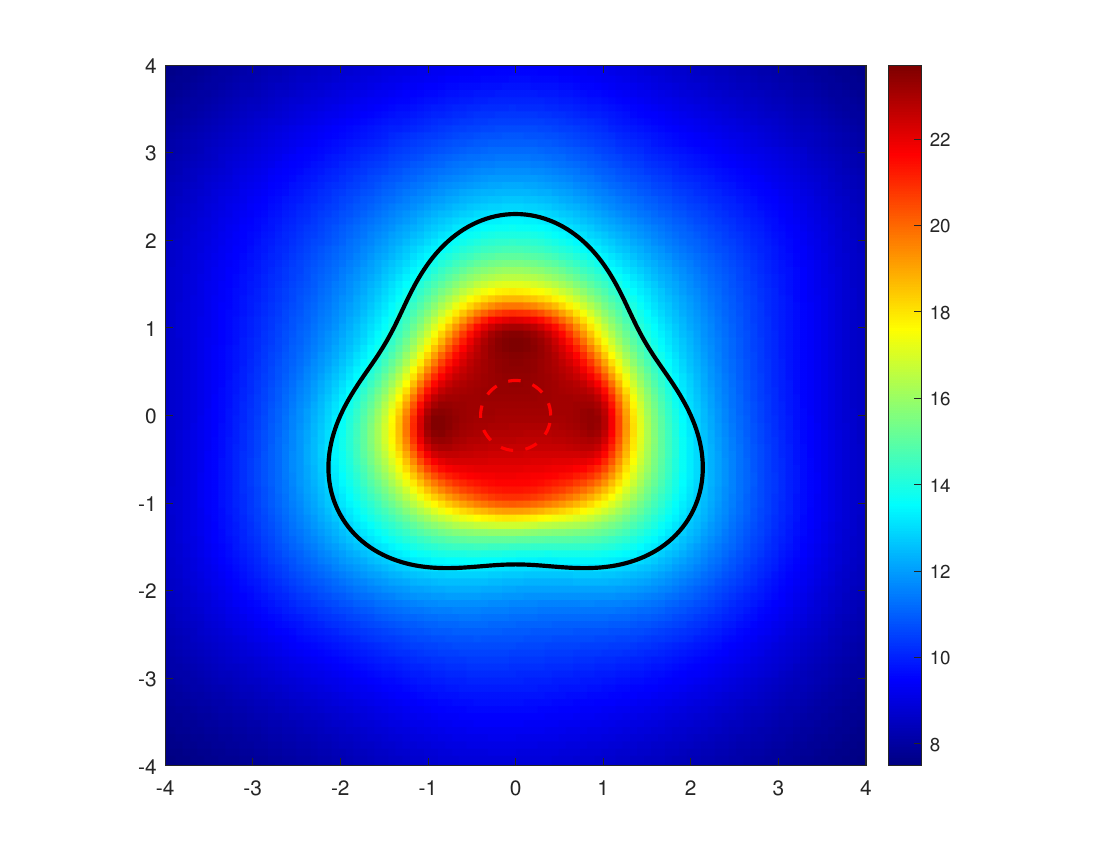}
\caption{The heat map of $\|g_s\|$}
\label{FIG:triangle 0.01}
\end{subfigure}\hfil
\begin{subfigure}[t]{0.4\textwidth}
\centering
\includegraphics[height=5cm]{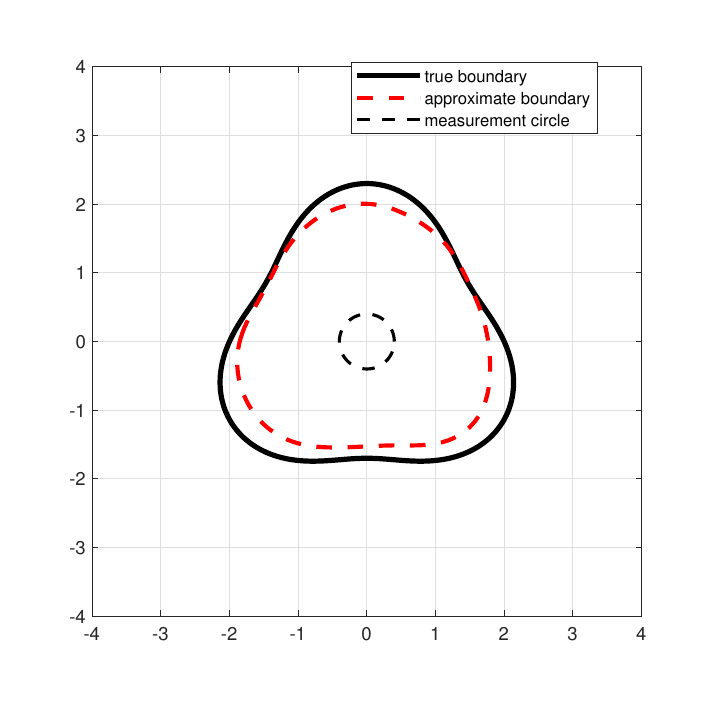}
\caption{Reconstructed result for $\partial D$}
\label{FIG:triangle boundary 0.01}
\end{subfigure}
\begin{subfigure}[t]{0.4\textwidth}
\centering
\includegraphics[height=5cm]{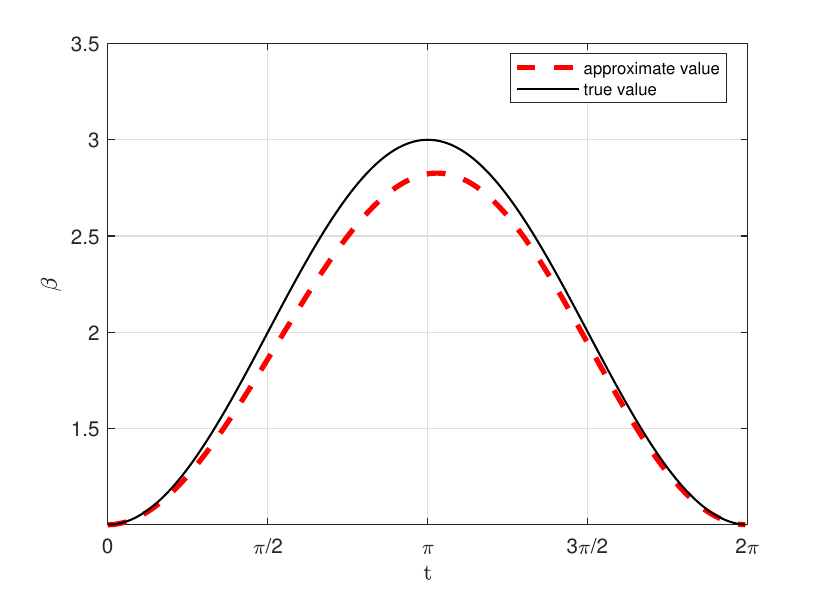}
\caption{Reconstructed result for $\beta$}
\label{FIG:triangle impedance 0.01}
\end{subfigure}
\caption{Reconstructed results of triangular cavity with $\omega=3,\lambda=1,\mu=1,\delta=1\%$. }
\label{FIG:0.01sj}
\end{figure}
\begin{figure}[!htp]
\centering
\begin{subfigure}[t]{0.5\textwidth}
\centering
\includegraphics[height=5cm]{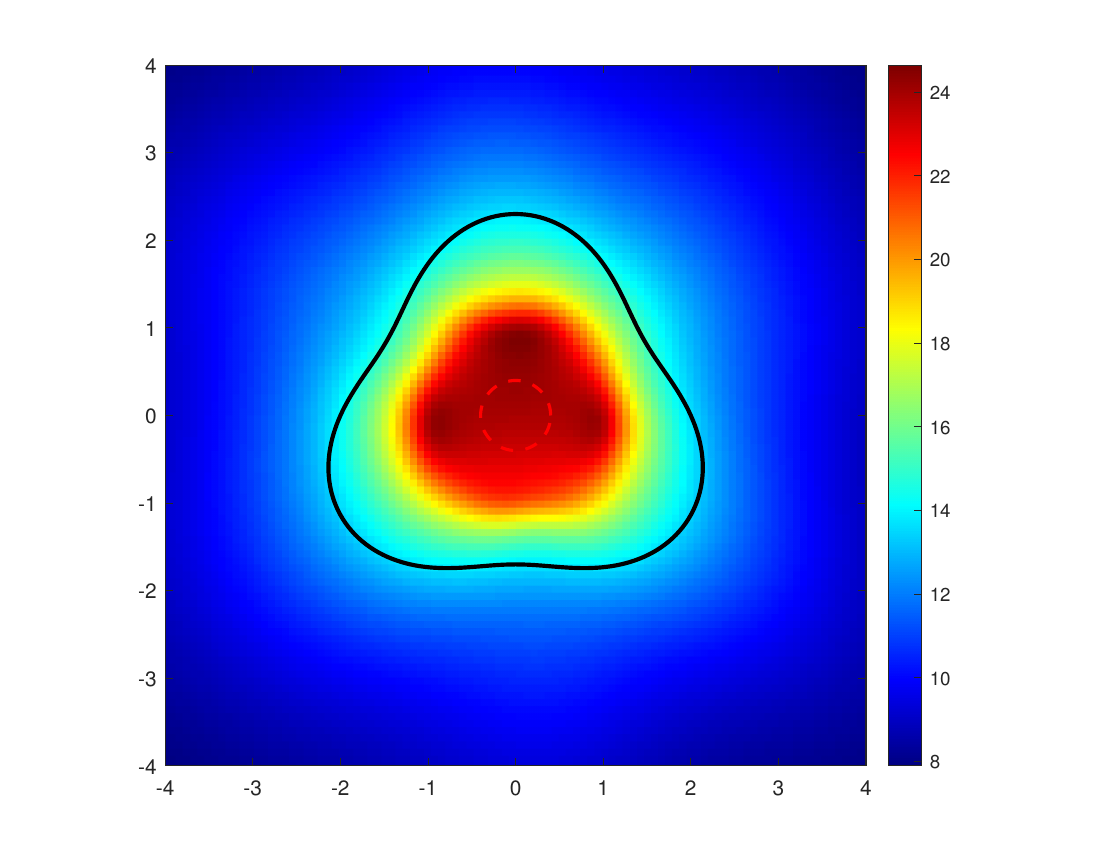}
\caption{The heat map of $\|g_s\|$}
\label{FIG:triangle 0.02}
\end{subfigure}\hfil
\begin{subfigure}[t]{0.4\textwidth}
\centering
\includegraphics[height=5cm]{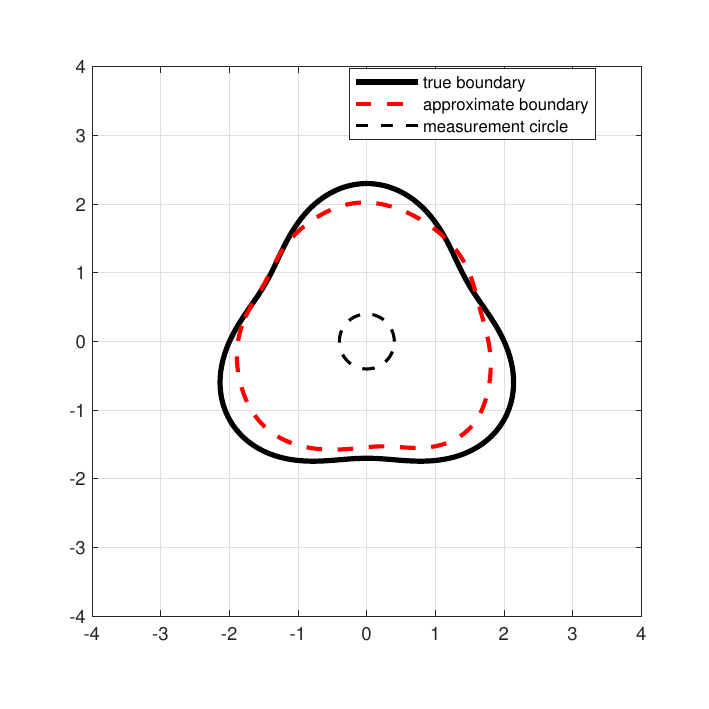}
\caption{Reconstructed result for $\partial D$}
\label{FIG:triangle boundary 0.02}
\end{subfigure}
\begin{subfigure}[t]{0.4\textwidth}
\centering
\includegraphics[height=5cm]{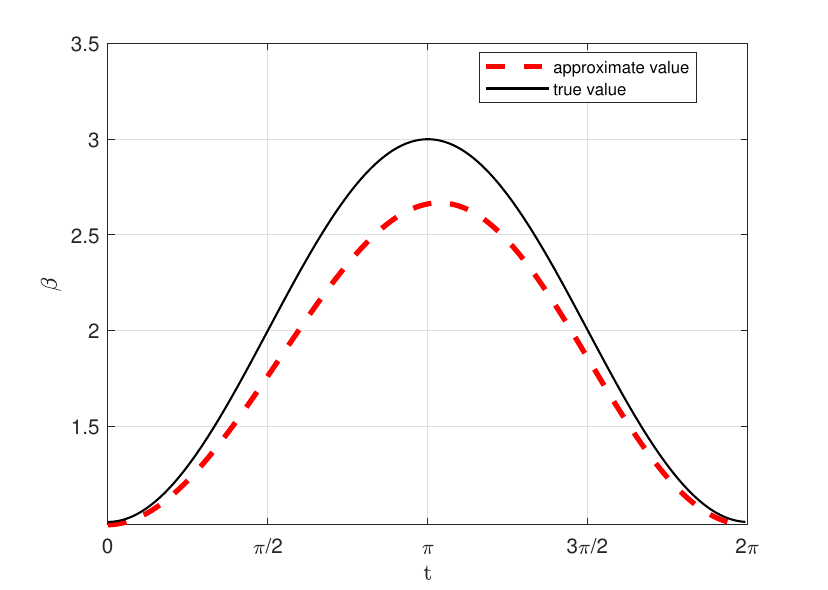}
\caption{Reconstructed result for $\beta$}
\label{FIG:triangle impedance 0.02}
\end{subfigure}
\caption{Reconstructed results of triangular cavity with $\omega=3,\lambda=1,\mu=1,\delta=2\%$.}
\label{FIG:0.02sj}
\end{figure}

\section*{Funding}

The work of Zeng is supported by the Science and Technology Research Program of Chongqing Municipal Education Commission (Grant No. KJQN202200525).

\section*{Author contributions}
Shiyu Xu: Formal analysis; Investigation; Software; Visualization; Writing – original draft. 
Huiling Zheng: Data curation; Validation; Writing – review \& editing.
Fang Zeng: Conceptualization; Funding acquisition; Methodology; Project administration; Resources; Supervision; Writing – review \& editing.
% List author names and the contributions made to the article, using terms from the NISO Contributor Roles Taxonomy (CRediT) https://credit.niso.org
\section*{Data availability}
The data that support the findings of this study are available from the corresponding author upon reasonable request.

\printbibliography

\end{document}